\newcounter{commentcounter}
\theoremstyle{plain}
\newtheorem{theorem}{Theorem}[section]
\newtheorem{lemma}[theorem]{Lemma}
\newtheorem{corollary}[theorem]{Corollary}
\newtheorem{proposition}[theorem]{Proposition}
\newtheorem{conjecture}[theorem]{Conjecture}
\theoremstyle{definition}
\newtheorem{definition}[theorem]{Definition}
\newtheorem{example}[theorem]{Example}
\newtheorem{remark}[theorem]{Remark}
\newtheorem{problem}[theorem]{Problem}
\newtheorem*{assumption}{Assumption}
\newcommand{\R}{\mathbb{R}}
\newcommand{\C}{\mathbb{C}}
\newcommand{\Z}{\mathbb{Z}}
\newcommand{\KG}{\mathrm{KG}}
\newcommand{\Emb}{\mathrm{Emb}}
\newcommand{\AEmb}{\mathrm{AEmb}}
\DeclareMathOperator{\coind}{\mathrm{coind}}
\begin{document}
% -------------------------------------------------------------------%

\title[Coupled embeddability]{Coupled embeddability}

% -------------------------------------------------------------------%

% -------------------------------------------------------------------%

\author{Florian Frick}
\address[FF]{Dept.\ Math.\ Sciences, Carnegie Mellon University, Pittsburgh, PA 15213, USA}
\email{frick@cmu.edu} 

\author{Michael Harrison}
\address[MH]{Dept.\ Math.\ Sciences, Carnegie Mellon University, Pittsburgh, PA 15213, USA}
\email{mah5044@gmail.com} 

\thanks{FF was supported by NSF grant DMS 1855591 and a Sloan Research Fellowship.  MH was supported by Mathematisches Forschungsinstitut Oberwolfach with an Oberwolfach Leibniz Fellowship.}

% -------------------------------------------------------------------%

\begin{abstract}
\small
We introduce the notion of coupled embeddability,  defined for maps on products of topological spaces.  We use known results for nonsingular biskew and bilinear maps to generate simple examples and nonexamples of coupled embeddings.  We study genericity properties for coupled embeddings of smooth manifolds, extend the Whitney embedding theorems to statements about coupled embeddability, and we discuss a Haefliger-type result for coupled embeddings. We relate the notion of coupled embeddability to the $\Z/2$-coindex of embedding spaces, recently introduced and studied by the authors.  With a straightforward generalization of these results, we obtain strong obstructions to the existence of coupled embeddings in terms of the combinatorics of triangulations.  In particular, we generalize nonembeddability results for certain simplicial complexes to sharp coupled nonembeddability results for certain pairs of simplicial complexes.
\end{abstract}

\date{\today}
\maketitle

\section{Introduction}

Let $X$ and $Y$ be topological spaces such that $X$ does not embed in $\R^m$ and $Y$ does not embed in~$\R^n$.  Then for any continuous map $f \colon X \times Y \to \R^{m+n}$ and any decomposition $\R^{m+n} = \R^m \times \R^n$,
\begin{compactitem}
\item for fixed $y \in Y$, the projection of $f(-,y)$ to $\R^m$ fails to be an embedding, and
\item for fixed $x \in X$, the projection of $f(x,-)$ to $\R^n$ fails to be an embedding.
\end{compactitem}
When can the non-embeddability of $X$ into $\R^m$ and $Y$ into $\R^n$ be witnessed simultaneously?  We formalize this question as follows:

\begin{definition} Given $X$, $Y$, and $f$ as above, we say that $x_1, x_2 \in X$ and $y_1, y_2 \in Y$ form an \emph{axis-aligned parallelogram} if there exists a decomposition $\R^{m+n} = \R^m \times \R^n$ (not necessarily orthogonal) such that
\begin{compactitem}
\item $f(x_1,y_1) = f(x_2,y_1)$ and $f(x_1,y_2) = f(x_2,y_2)$ in the first $m$ coordinates, and
\item $f(x_1,y_1) = f(x_1,y_2)$ and $f(x_2,y_1) = f(x_2,y_2)$ in the last $n$ coordinates.
\end{compactitem}
In this case we say that $f$ is a \emph{coupled nonembedding}, or that it satisfies the parallelogram condition.  (See Figure \ref{fig:par}).
\end{definition}

\begin{figure}[ht!]
\centerline{
\includegraphics[width=4in]{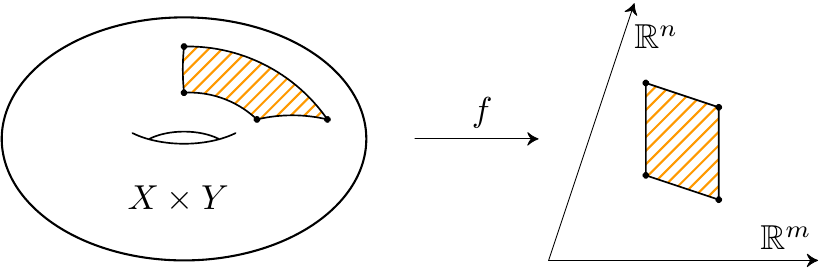}
}
\caption{An axis-aligned parallelogram}
\label{fig:par}
\end{figure}

\begin{problem} 
\label{prob:general}
Given $f \colon X \times Y \to \R^{m+n}$, do there exist $x_1, x_2 \in X$ and $y_1, y_2 \in Y$ which form an axis-aligned parallelogram?  
\end{problem}

We are particularly interested in a special case of coupled nonembeddability, which in fact initially motivated the general question above.

\begin{problem}
\label{prob:spheres}
Given $f \colon S^m \times S^n \to \R^{m+n}$, do there exist $x \in S^m$ and $y \in S^n$ such that $\pm x$, $\pm y$ form an axis-aligned parallelogram?
\end{problem}
This special case is natural from the perspective of equivariant topology.  Indeed, it follows from the Borsuk-Ulam theorem that for any decomposition $\R^{m+n} = \R^m \times \R^n$, 
\begin{compactitem}
\item for fixed $y \in S^n$, there exists $x' \in S^m$ such that $f(x',y) = f(-x',y)$ in the first $m$ coordinates, and
\item for fixed $x \in S^m$, there exists $y' \in S^n$ such that $f(x,y') = f(x,-y')$ in the last $n$ coordinates.
\end{compactitem}
Thus the problem above asks to determine whether the upshot of Borsuk-Ulam is simultaneously satisfied among $\pm x$ and $\pm y$.   We address this special case of the problem in Section \ref{sec:spheres}.

Before studying these problems we offer some simple remarks and clarifications.  Consider a codomain of general dimension $d$ and the space of maps $X \times Y \to \R^d$.  The coupled nonembeddability condition may be viewed as a certain degeneracy condition; for example if $d$ is large enough then a generic map $f$ does not satisfy a parallelogram condition with respect to \emph{any} decomposition $\R^d = \R^k \times \R^{d-k}$; in this case we say that $f$ is a \emph{coupled embedding}. 

We also comment on the role of the dimension $k$ in the decomposition $\R^d = \R^k \times \R^{d-k}$.  If there exists any axis-aligned parallelogram, for any $k \in \left\{ 0, \dots, d \right\}$, then there also exists an axis-aligned parallelogram with respect to some decomposition $\R^d = \R^\ell \times \R^{d - \ell}$ for each $\ell \in \left\{ 1, \dots, d-1 \right\}$, by allowing each factor of the decomposition to contain the span of one edge of the parallelogram.  Thus (assuming one ignores the cases $k = 0$ and $k=d$) it is not necessary to specify the dimensions of the decomposition; that is, the coupled embedding condition depends only on $X$, $Y$, and $d$.

\begin{problem} \label{prob:mindim} Given $X$ and $Y$,  what is the minimum dimension $d(X,Y)$ such that there exists a coupled embedding $f \colon X \times Y \to \R^{d(X,Y)}$?
\end{problem}

This question has been posed and studied for other nondegeneracy conditions: immersions, embeddings, $k$-regular embeddings, totally skew embeddings, totally nonparallel immersions, and more; yet these are all conditions on a single manifold.   To the best of our knowledge there has been no study of differential conditions on product manifolds which take into account the product structure.  We find Problem \ref{prob:mindim} enticing both due to its simplicity as a condition on product manifolds, and due to the undertones of equivariant topology.

\begin{remark}  We will see that if $e_X$ and $e_Y$ represent, respectively, the minimum dimensions of embeddability of $X$ and $Y$, then $d(X,Y) < e_X + e_Y$.  Thus the original nonembeddability assumptions on $X$ and $Y$, while not strictly necessary, are mentioned to focus the attention on the interesting range of dimensions for this problem.
\end{remark}

\begin{remark}  It is possible that $X$ does not embed in $\R^m$ and $Y$ does not embed in $\R^n$, yet nevertheless there exists an embedding $X \times Y \to \R^{m+n}$.  For example, $\R P^2$ does not embed in $\R^3$, and $\R P^3$ does not embed in $\R^4$, yet their product embeds in $\R^7$ (see \cite[Lemma 2.1]{ARS}).
\end{remark}

Our main results for coupled embeddings are as follows:

\begin{theorem}
\label{thm:mainexist}
Let $M$ and $N$ be smooth compact manifolds of dimensions $p$ and $q$.  Then:
\begin{compactenum}[(a)]
\item For $d > 2(p+q)$, a generic map $f \in C^\infty(M \times N, \R^d)$ is a coupled embedding.
\item There exists a coupled embedding $M \times N \to \R^{2p+2q-2}$.
\end{compactenum}
\end{theorem}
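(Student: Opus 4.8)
The plan is to phrase both parts through the single ``second difference''
\[
\Phi_f(x_1,x_2,y_1,y_2)=f(x_1,y_1)-f(x_2,y_1)-f(x_1,y_2)+f(x_2,y_2),
\]
regarded as a map on $S_0=\{(x_1,x_2,y_1,y_2)\in M\times M\times N\times N:\ x_1\neq x_2,\ y_1\neq y_2\}$, an open manifold of dimension $2(p+q)$ (requiring $x_1\neq x_2$ and $y_1\neq y_2$ is harmless, as the parallelogram condition is vacuous otherwise). The basic remark is that if $x_1,x_2,y_1,y_2$ form an axis-aligned parallelogram for a decomposition $\R^d=U\oplus V$, then subtracting the two coincidences in the first factor shows $\Phi_f(x_1,x_2,y_1,y_2)\in V$ and subtracting the two in the second factor shows it lies in $U$, so $\Phi_f(x_1,x_2,y_1,y_2)\in U\cap V=\{0\}$. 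Thus a coupled nonembedding forces $\Phi_f$ to vanish somewhere on $S_0$, and it suffices to produce $f$ with $\Phi_f$ nowhere zero on $S_0$ — generically for (a), explicitly for (b).

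For (a) I would invoke parametric transversality. For every $\xi\in S_0$ the four points $(x_1,y_1),(x_2,y_1),(x_1,y_2),(x_2,y_2)$ are distinct in $M\times N$, so a variation of $f$ supported near $(x_1,y_1)$ alone changes $\Phi_f(\xi)$ by an arbitrary vector; hence $(f,\xi)\mapsto\Phi_f(\xi)$ is a submersion $C^\infty(M\times N,\R^d)\times S_0\to\R^d$. By Thom's parametric transversality theorem a generic $f$ has $\Phi_f$ transverse to $0$, and since $\dim S_0=2(p+q)<d$ this forces $\Phi_f^{-1}(0)=\emptyset$. Non-compactness of $S_0$ causes no trouble here, and the decompositions with $k\in\{0,d\}$ impose even more equations and are handled identically.

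For (b) the idea is to take $f(x,y)=\gamma(x)\cdot\delta(y)$ for a cleverly chosen bilinear product. Fix smooth embeddings $\gamma\colon M\hookrightarrow\R^{2p}$ and $\delta\colon N\hookrightarrow\R^{2q}$ (Whitney), translated so that $0\notin\gamma(M)$ and $0\notin\delta(N)$, and identify $\R^{2p}=\C^p$, $\R^{2q}=\C^q$, $\R^{2p+2q-2}=\C^{p+q-1}$ with the complex polynomials of degrees $<p$, $<q$, $<p+q-1$. Let $\mu\colon\C^p\times\C^q\to\C^{p+q-1}$ be polynomial multiplication: it is $\C$-bilinear, hence $\R$-bilinear as a map $\R^{2p}\times\R^{2q}\to\R^{2p+2q-2}$, and it is \emph{nonsingular}, since $PQ=0$ in $\C[t]$ implies $P=0$ or $Q=0$. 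Put $f=\mu\circ(\gamma\times\delta)$; bilinearity gives
\[
\Phi_f(x_1,x_2,y_1,y_2)=\mu\bigl(\gamma(x_1)-\gamma(x_2),\ \delta(y_1)-\delta(y_2)\bigr),
\]
and on $S_0$ both arguments are nonzero (by injectivity of $\gamma,\delta$), so $\Phi_f$ is nowhere zero on $S_0$; hence $f$ is a coupled embedding $M\times N\to\R^{2p+2q-2}$. The translations keep $\gamma(x),\delta(y)$ nonzero, which by nonsingularity of $\mu$ also makes $f(x,\cdot)$ and $f(\cdot,y)$ injective, ruling out the $k=0$ and $k=d$ configurations.

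Part (a) is routine transversality; the one real point is the choice of product in (b). Multiplying \emph{real} polynomials gives only a nonsingular bilinear map $\R^{2p}\times\R^{2q}\to\R^{2p+2q-1}$, hence only a coupled embedding into $\R^{2p+2q-1}$; the extra dimension comes from multiplying over $\C$, since a product of complex polynomials of degrees $<p$ and $<q$ has only $p+q-1$ complex, i.e.\ $2p+2q-2$ real, coefficients. The things I would double-check are the reduction in the first paragraph — that a genuine parallelogram really forces $\Phi_f=0$, and that coincident-point and $k\in\{0,d\}$ cases are covered — and, for (a), the standard fact that parametric transversality applies with the non-compact source $S_0$ and the space of maps as parameters.
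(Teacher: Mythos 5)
Your proposal is correct and follows essentially the same route as the paper: part (a) is the paper's transversality argument in slightly different packaging (the paper records the four values $f(x_i,y_j)$ in a bespoke four-point jet space and takes the codimension-$d$ submanifold $\{z_1+z_2-z_3-z_4=0\}$, which is exactly your ``$\Phi_f$ transverse to $0$'' on the same $2(p+q)$-dimensional domain, with the same key observation that the four corner points are distinct and hence independently perturbable), and part (b) is verbatim the paper's proof via complex polynomial multiplication applied to Whitney embeddings into $\R^{2p}$ and $\R^{2q}$.
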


It is interesting to compare this result to the Whitney theorems, which state that every smooth $p$-dimensional manifold $M$ embeds in $\R^{2p}$, and that if $d > 2p$, a generic map $f \in C^\infty(M, \R^d)$ is an embedding.  Thus it is somewhat surprising to see a difference of \emph{three} dimensions between the genericity result and the existence result for coupled embeddings.

\begin{problem} Given $p$ and $q$, what is the minimum dimension $d$ such that for every $p$-dimensional manifold $M$ and $q$-dimensional manifold $N$, there exists a coupled embedding $M \times N \to \R^d$?
\end{problem}

The answer to this question is unknown even for embeddings of manifolds, but if $p$ is not a power of $2$, then every $p$-dimensional manifold embeds in a dimension less than $2p$.  Similarly, we will show that the minimum dimension $d$ for coupled embeddings of manifolds satisfies $d < 2p+2q-2$ unless one of $p$ or $q$ is a power of $2$.  

The question is easier for embeddings of simplicial complexes of dimension $p$, for which the answer is $2p+1$.  This is sharp due to the nonembeddability of $[3]^{*(p+1)}$ and $\Delta^{(p)}_{2p+2}$ into $\R^{2p}$.  We will show that under a certain number-theoretic condition on $p$ and $q$, a similar result holds for coupled embeddings.

\begin{theorem}
\label{thm:mainobstruct}
Suppose that the nonnegative integers $p$ and $q$ do not share a one in any digit of their binary expansions.  Then $2p+2q+1$ is the minimum dimension into which every pair of simplicial complexes coupled embeds.  In particular,
\[
d\left([3]^{*(p+1)}, [3]^{*(q+1)}\right) = d\left(\Delta^{(p)}_{2p+2}, [3]^{*(q+1)}\right) = d\left(\Delta^{(p)}_{2p+2}, \Delta^{(q)}_{2q+2}\right) = 2p+2q + 1.
\]
\end{theorem}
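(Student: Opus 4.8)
The plan is to prove the two inequalities a minimality statement requires. For the upper bound, note that every $p$-dimensional simplicial complex embeds in $\R^{2p+1}$ and every $q$-dimensional one in $\R^{2q+1}$, so by the remark that $d(X,Y)<e_X+e_Y$ we get $d(X,Y)<(2p+1)+(2q+1)$, hence $d(X,Y)\le 2p+2q+1$ for any such pair; in particular the three listed pairs coupled embed in $\R^{2p+2q+1}$. Everything else is the matching lower bound: none of these pairs coupled embeds in $\R^{2p+2q}$.

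For the lower bound I would set up a ``coupled'' analogue of the deleted-join ($\Z/2$-coindex) obstruction to embeddability. Given $f\colon X\times Y\to\R^d$, write a point of the deleted join $X^{*2}_\Delta$ as $s\,a_1\oplus(1-s)\,a_2$ with $a_1,a_2$ in disjoint faces of $X$, and similarly $t\,b_1\oplus(1-t)\,b_2$ in $Y^{*2}_\Delta$, and define the ``mixed second difference'' test map
\[
\Phi\bigl(s\,a_1\oplus(1-s)\,a_2,\ t\,b_1\oplus(1-t)\,b_2\bigr)=\bigl(2s-1,\ 2t-1,\ s\,u(a_1)-(1-s)\,u(a_2)\bigr)\in\R\oplus\R\oplus\R^d,
\]
where $u(a)=t\,f(a,b_1)-(1-t)\,f(a,b_2)$. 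Let $G=\Z/2\times\Z/2$ act on the domain with $\sigma$ swapping $a_1\leftrightarrow a_2$ and $\tau$ swapping $b_1\leftrightarrow b_2$; one checks that $\Phi$ is $G$-equivariant into $\chi_1\oplus\chi_2\oplus\chi_3^{\,d}$, where $\chi_1,\chi_2,\chi_3$ are the three nontrivial characters of $G$ (so $\sigma$ negates the first and last blocks, $\tau$ the second and last). Crucially $\Phi$ vanishes exactly at points with $s=t=\tfrac12$ and $f(a_1,b_1)-f(a_2,b_1)-f(a_1,b_2)+f(a_2,b_2)=0$, i.e.\ where the four images form an affine parallelogram. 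When $f$ is a coupled embedding such a parallelogram can only occur when the four points are collinear (this is the one subtlety, discussed last); granting that $\Phi$ may be arranged nowhere zero, it yields a $G$-map $X^{*2}_\Delta\times Y^{*2}_\Delta\to S(\chi_1\oplus\chi_2\oplus\chi_3^{\,d})=S^{d+1}$.

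Now feed in the coindex inputs. If $\coind_{\Z/2}(X^{*2}_\Delta)\ge a$ and $\coind_{\Z/2}(Y^{*2}_\Delta)\ge b$, pick $\Z/2$-maps $S^a\to X^{*2}_\Delta$ and $S^b\to Y^{*2}_\Delta$; their product is a $G$-map $S^a\times S^b\to X^{*2}_\Delta\times Y^{*2}_\Delta$ with $\sigma$ acting antipodally on the first sphere and trivially on the second and $\tau$ the other way, so composing with $\Phi/|\Phi|$ would give a $G$-map $S^a\times S^b\to S^{d+1}$. But $S^a\times S^b$ is a free $G$-space with quotient $\R P^a\times\R P^b$, so $H^*_G(\mathrm{pt};\Z/2)=\Z/2[x,y]$ surjects onto $\Z/2[u,v]/(u^{a+1},v^{b+1})$ with $x\mapsto u$, $y\mapsto v$ (and the Euler classes of $\chi_1,\chi_2,\chi_3$ are $x$, $y$, $x+y$); the existence of such a map forces the equivariant Euler class $xy(x+y)^d$ of $\chi_1\oplus\chi_2\oplus\chi_3^{\,d}$ to map to $0$, i.e.\ $uv(u+v)^d=0$ in $\Z/2[u,v]/(u^{a+1},v^{b+1})$. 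Thus, conditionally, \emph{if $uv(u+v)^d\neq 0$ there then $X\times Y$ does not coupled embed in $\R^d$.} I would then apply this with $d=2p+2q$: since deleted join commutes with join and $[3]^{*2}_\Delta$ is the hexagon $S^1$, we get $([3]^{*(p+1)})^{*2}_\Delta\cong(S^1)^{*(p+1)}\cong S^{2p+1}$, so its $\coind_{\Z/2}$ is $2p+1$, while $\coind_{\Z/2}\bigl((\Delta^{(p)}_{2p+2})^{*2}_\Delta\bigr)\ge 2p+1$ is the deleted-join form of the van Kampen--Flores theorem. Taking $a=2p+1$, $b=2q+1$, the only monomial of $uv(u+v)^{2p+2q}=\sum_i\binom{2p+2q}{i}u^{i+1}v^{2p+2q+1-i}$ surviving modulo $(u^{2p+2},v^{2q+2})$ is $\binom{2p+2q}{2p}u^{2p+1}v^{2q+1}$, and by Lucas's theorem $\binom{2p+2q}{2p}$ is odd exactly when $2p$ and $2q$ — equivalently $p$ and $q$ — share no binary digit, which is the hypothesis. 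So none of the three pairs coupled embeds in $\R^{2p+2q}$, and together with the upper bound each has $d(\cdot,\cdot)=2p+2q+1$, so $2p+2q+1$ is the minimum dimension that works for all pairs.

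The step I expect to be the real obstacle is the one I slid past: that the obstruction survives the zeros of $\Phi$. A coupled embedding need not satisfy $\Phi\ne 0$, since it may contain affine parallelograms whose four image points are collinear (edges nonzero but parallel), and the set of honest axis-aligned parallelograms is not closed, so there is no continuous modification of $\Phi$ vanishing exactly on it. The plan is to reduce to a generic piecewise-linear coupled embedding, so that $\Phi^{-1}(0)$ is a finite union of free $G$-orbits in the slice $\{s=t=\tfrac12\}$ (pairs of points in disjoint faces), and then to show either that a transverse zero of the mixed second difference forces the two edge vectors into general position — producing an honest axis-aligned parallelogram and a contradiction — or, failing a clean transversality statement, that each such degenerate zero contributes an even local intersection multiplicity, contradicting the computation that the total equivariant Euler number is $\binom{2p+2q}{2p}\equiv 1\pmod 2$. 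Alternatively, as the abstract hints, one can run the entire argument for coupled \emph{almost}-embeddings and invoke the authors' $\Z/2$-coindex results for embedding spaces, of which the statement above is the announced ``straightforward generalization''; either way, this parity/genericity bookkeeping is where the work lies.
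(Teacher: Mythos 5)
Your proposal is correct in outline and is essentially the paper's argument, with one genuine variation. The upper bound is identical (Corollary \ref{cor:dim} applied to embeddings into $\R^{2p+1}$ and $\R^{2q+1}$), and your test map on products of deleted joins --- the mixed second difference together with $(\lambda_1-\lambda_2,\mu_1-\mu_2)$, valued in $V_{-+}\oplus V_{+-}\oplus V_{--}^{d}$ --- is literally the map $\Phi_f$ the paper uses; your Euler class computation $uv(u+v)^{2p+2q}\equiv\binom{2p+2q}{2p}u^{2p+1}v^{2q+1}$ is precisely a proof of the case $i=j=1$, $k=d$ of Lemma \ref{lem:productofspheres}, which the paper imports from Ramos. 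Where you differ is in getting spheres into the domain: you pull back along equivariant maps $S^{2p+1}\to X^{*2}_\Delta$, which requires $\coind\bigl((\Delta^{(p)}_{2p+2})^{*2}_\Delta\bigr)\ge 2p+1$ --- true (the deleted join is a wedge of $(2p+1)$-spheres, hence $2p$-connected), but this is a stronger statement than the index bound usually called van Kampen--Flores and needs its own justification. The paper instead extends $f$ to the ambient simplices and uses Sarkaria's coloring trick (Lemma \ref{lem:color}): $(\Delta_{n-1})^{*2}_\Delta$ is already a sphere, and membership in the subcomplex is policed by $\chi(\KG(\Sigma))$ extra equivariant coordinates, so that both $[3]^{*(p+1)}$ and $\Delta^{(p)}_{2p+2}$ enter Theorem \ref{thm:alm} only through the count $n-\chi-2=2p$. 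Your route buys a self-contained Borsuk--Ulam input; the paper's avoids any connectivity computation for deleted joins of skeleta.

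On the obstacle you flag at the end: the paper does not engage with it. Throughout Section \ref{sec:nonexistence} it works with coupled almost-embeddings and treats the vanishing of the second difference as equivalent to the existence of an axis-aligned parallelogram (this is how Lemma \ref{lem:nonpar} is stated and how the proof of Lemma \ref{lem:discemb} concludes). You are right that, read literally, a vanishing second difference whose two edge vectors are parallel and nonzero admits no decomposition separating the edges, so the equivalence has a gap in the collinear case --- but that gap sits in the paper's proof exactly as in yours, and none of the genericity or parity machinery you sketch is part of the intended argument; adopting the convention that the parallelogram condition \emph{is} the vanishing of $\Phi_f$ (as Lemma \ref{lem:nonpar} suggests) makes both proofs complete as written.
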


The structure of the paper is as follows.  In Section \ref{sec:prelim} we review definitions for biskew and bilinear maps, which play a crucial role in the study of coupled embeddings.   In Section \ref{sec:existence} we extend usual transversality results to discuss genericity statements for coupled embeddings and to prove Theorem \ref{thm:mainexist}.  We also discuss a Haefliger-type conjecture for coupled embeddings and outline a proposed argument.  In Section \ref{sec:nonexistence} we study coupled embeddability of simplicial complexes.  We discuss a process for obtaining lower bounds on $d(X,Y)$ in terms of combinatorics of triangulations of $X$ and $Y$.   We prove Theorem \ref{thm:mainobstruct} and compute the value of $d$ for other pairs of spaces.

\section{Preliminaries and basic bounds}
\label{sec:prelim}

In this section we recall the definitions and basic results for biskew and bilinear maps.  In Section~\ref{sec:spheres} we will see that Problem \ref{prob:spheres} can be easily restated in terms of the existence of nonsingular biskew maps, allowing us to focus exclusively on the more general Problems \ref{prob:general} and \ref{prob:mindim}.

\subsection{Nonsingular biskew and bilinear maps}

We recall the definitions and basic results for \emph{biskew}, i.e.\ $(\Z/2)^2$-equivariant, maps $S^m \times S^n \to \R^d$; where here each generator acts by negation on the codomain.  A biskew map is called \emph{nonsingular} if $0$ is not in its image.   Given $m$ and $n$, the smallest dimension $d$ such that there exists a nonsingular biskew map $S^m \times S^n \to \R^d$ is unknown; however, the following obstruction is well-known.

\begin{lemma}
\label{lem:binary} 
If $m$ and $n$ do not share a one in any digit of their binary expansions, there is no nonsingular biskew map $S^m \times S^n \to \R^{m+n}$.  That is, every such map hits $0$.
\end{lemma}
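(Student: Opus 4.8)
The plan is to reduce the statement to a computation of the mod $2$ cohomology of the product $\R P^m \times \R P^n$, using the standard fact that $(\Z/2)^2$-equivariant maps that avoid $0$ yield a map of classifying spaces. Suppose for contradiction that $g \colon S^m \times S^n \to \R^{m+n} \setminus \{0\}$ is nonsingular biskew. Composing with the radial retraction onto $S^{m+n-1}$ gives a $(\Z/2)^2$-equivariant map $S^m \times S^n \to S^{m+n-1}$, where on the target $S^{m+n-1} \subseteq \R^{m+n}$ both generators act by the antipodal map. Since the two generators act the same way on the target, this factors through a $\Z/2$-equivariant map for the diagonal action on the source; but it is cleaner to quotient directly. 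The involutions on $S^m$ and on $S^n$ are free, so we obtain a map on quotients
\[
\bar g \colon \R P^m \times \R P^n \longrightarrow S^{m+n-1}/(\Z/2)^2 = \R P^{m+n-1},
\]
where the identification of the target uses that the $(\Z/2)^2$-action on $S^{m+n-1}$ has the diagonal $\Z/2$ acting trivially (the two antipodal maps coincide), hence the effective quotient is by a single antipodal action. Actually the most robust route is: the composite $S^m \times S^n \to S^{m+n-1}$ is equivariant for the map of groups $(\Z/2)^2 \to \Z/2$ that is multiplication, and this induces $\R P^m \times \R P^n \to \R P^{m+n-1}$ on orbit spaces.

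Next I would analyze the effect on $\Z/2$-cohomology. Write $H^*(\R P^k; \Z/2) = \Z/2[t]/(t^{k+1})$. The map $\bar g$ pulls back the generator $w \in H^1(\R P^{m+n-1}; \Z/2)$ to some class in $H^1(\R P^m \times \R P^n; \Z/2) = \Z/2\langle a, b\rangle$, where $a, b$ are the pullbacks of the generators of the two factors; naturality of the construction with respect to including a single sphere $S^m \hookrightarrow S^m \times S^n$ (as $S^m \times \{y_0\}$) together with the fact that on such a slice the map is the antipodal-respecting map $S^m \to S^{m+n-1}$ used in the Borsuk--Ulam argument forces the coefficient of $a$ to be $1$, and symmetrically the coefficient of $b$ to be $1$. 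Hence $\bar g^*(w) = a + b$. Since $w^{m+n} = 0$ in $H^*(\R P^{m+n-1}; \Z/2)$, functoriality gives $(a+b)^{m+n} = 0$ in $H^*(\R P^m \times \R P^n; \Z/2) = \Z/2[a,b]/(a^{m+1}, b^{n+1})$.

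Finally I would derive the contradiction from a Lucas-type argument. Expand $(a+b)^{m+n} = \sum_{i} \binom{m+n}{i} a^i b^{m+n-i}$. The term with $i = m$ is $\binom{m+n}{m} a^m b^n$, and $a^m b^n$ is the nonzero top class in $H^*(\R P^m \times \R P^n; \Z/2)$ (it is the only surviving monomial in that bidegree). So $(a+b)^{m+n} = 0$ forces $\binom{m+n}{m} \equiv 0 \pmod 2$. But by Kummer's theorem (equivalently Lucas's theorem), $\binom{m+n}{m}$ is odd precisely when the binary expansions of $m$ and $n$ have no common digit equal to $1$ — i.e.\ when there are no carries in the binary addition $m + n$. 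This is exactly the hypothesis, so $\binom{m+n}{m}$ is odd, contradicting $(a+b)^{m+n} = 0$.

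The main obstacle I anticipate is pinning down rigorously that $\bar g^*(w) = a + b$ rather than, say, $a$ or $b$ alone — one must rule out the possibility that the quotient map is "supported on one factor." This is handled by restricting to the slices $S^m \times \{y_0\}$ and $\{x_0\} \times S^n$: on each slice, biskewness forces the restricted map $S^m \to S^{m+n-1}$ (resp. $S^n \to S^{m+n-1}$) to be $\Z/2$-equivariant for the antipodal actions, hence of odd degree onto a copy of $\R P^m$ (resp. $\R P^n$) in $\R P^{m+n-1}$, which pins down each coefficient to be $1 \in \Z/2$. Everything else is routine naturality of Borel-type quotient constructions and a standard binomial coefficient computation.
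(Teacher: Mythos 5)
Your proof is correct and follows essentially the same route the paper sketches (the Hopf argument): descend to a map $\R P^m \times \R P^n \to \R P^{m+n-1}$, pull back the generator in mod-$2$ cohomology, and extract the condition that $\binom{m+n}{m}$ is even, which contradicts Kummer/Lucas under the stated hypothesis on binary expansions. The paper only gestures at this argument, and your two supporting details --- equivariance with respect to the multiplication homomorphism $(\Z/2)^2 \to \Z/2$ to justify the quotient map, and the slice restrictions to pin down $\bar g^*(w) = a+b$ --- are exactly the right way to make it rigorous.
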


A simple method of proof, originally due to Hopf \cite{Hopf} (though not originally presented in the modern language of algebraic topology), is to assume the existence of a nonsingular biskew map, write down the induced map in cohomology $H^*(\R P^{m+n-1}) \to H^*(\R P ^{m-1}) \otimes H^*(\R P^{n-1})$, and deduce the resulting number theoretic condition on $m$ and $n$ by studying the image of the generator.

These considerations are important for problems of immersions of projective spaces (see e.g.\ \cite[Section 6]{James}).

A bilinear map $B \colon \R^{m+1} \times \R^{n+1} \to \R^d$ is called \emph{nonsingular} if $B(x,y) = 0$ implies $x=0$ or $y=0$.  As with biskew maps, the smallest dimension $d$ for which there exists a nonsingular bilinear map $\R^{m+1} \times \R^{n+1} \to \R^d$ is, in general, unknown.  Nonsingular bilinear maps appeared a century ago in the works of Hurwitz and Radon in their studies of square identities (\cite{Hurwitz, Radon}), and since then, they have made prominent appearances in topology.  For example, a famous result of Adams~\cite{Adams} states that there exists a nonsingular bilinear map $\R^p \times \R^q \to \R^p$ if and only if there exist $q-1$ linearly independent tangent vector fields on $S^{p-1}$.  More recently, Ovsienko and Tabachnikov~\cite{OvsienkoTabachnikov} showed that these statements are equivalent to the existence of a fibration of $\R^{p+q-1}$ by pairwise skew affine copies of $\R^{q-1}$ (see also \cite{Harrison, Harrison2, Harrison3,OvsienkoTabachnikov2}).  The existence of nonsingular bilinear maps was studied in a series of articles by K.Y.\ Lam (e.g.\ \cite{Lam1, Lam6, Lam4, Lam3, Lam2, Lam5}) and by Berger and Friedland~\cite{BergerFriedland}.

We only state some rudimentary existence results.

\begin{lemma}
\label{lem:nonsingular}
There exists a nonsingular bilinear map $\R^{m+1} \times \R^{n+1} \to \R^{m+n+1}$.  If $m$ and $n$ are both odd,  there exists a nonsingular bilinear map $\R^{m+1} \times \R^{n+1} \to \R^{m+n}$.
\end{lemma}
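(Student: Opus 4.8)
The plan is to realize both maps explicitly as polynomial multiplication, exploiting the fact that a polynomial ring over a field has no zero divisors.

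For the first statement, I would identify $\R^{m+1}$ with the space of real polynomials of degree at most $m$ via $x = (x_0, \dots, x_m) \mapsto p_x(t) = \sum_{i=0}^{m} x_i t^i$, and likewise identify $\R^{n+1}$ with polynomials of degree at most $n$. Define $B(x,y)$ to be the coefficient vector of the product $p_x(t)\,p_y(t)$; this product has degree at most $m+n$, so $B$ takes values in $\R^{m+n+1}$. The map $B$ is bilinear because polynomial multiplication is bilinear in the coefficients, and it is nonsingular because $\R[t]$ is an integral domain: $p_x p_y = 0$ forces $p_x = 0$ or $p_y = 0$, i.e.\ $x = 0$ or $y = 0$.

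For the second statement, suppose $m$ and $n$ are both odd, so $m+1 = 2s$ and $n+1 = 2t$ are even, and fix linear identifications $\R^{m+1} \cong \C^{s}$ and $\R^{n+1} \cong \C^{t}$. Viewing $\C^{s}$ as complex polynomials of degree at most $s-1$ and $\C^{t}$ as complex polynomials of degree at most $t-1$, their product has degree at most $s+t-2$ and so lies in $\C^{s+t-1} \cong \R^{2(s+t-1)} = \R^{m+n}$. The same reasoning applies: complex polynomial multiplication is $\C$-bilinear, hence $\R$-bilinear, and $\C[t]$ is an integral domain, so the resulting map is nonsingular.

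There is essentially no obstacle here; the only content is the observation that nonsingular bilinear maps into low dimensions come for free from multiplication in integral domains. The single bookkeeping point worth noting is that passing from $\R$ to $\C$ — which buys the extra saved dimension — requires the source dimensions $m+1$ and $n+1$ to be even, i.e.\ $m$ and $n$ odd; otherwise one is stuck with the degree count over $\R$. (The same idea with quaternion or octonion multiplication gives further savings when the dimensions are divisible by $4$ or $8$, but that refinement is not needed for the stated lemma.)
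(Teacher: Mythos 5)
Your proof is correct and follows exactly the paper's approach: real polynomial multiplication for the first claim and complex polynomial multiplication (valid precisely when $m+1$ and $n+1$ are even) for the second, with nonsingularity coming from the absence of zero divisors. The dimension bookkeeping $\C^{s}\times\C^{t}\to\C^{s+t-1}\cong\R^{m+n}$ is right.
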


\begin{proof}  Regard each $\R^k$ as the coefficients of degree-$(k-1)$ polynomials.  Polynomial multiplication then yields a nonsingular bilinear map $\R^{m+1} \times \R^{n+1} \to \R^{m+n+1}$.  When $m$ and $n$ are odd, complex polynomial multiplication yields the desired nonsingular bilinear map.
\end{proof}

Other common examples of nonsingular bilinear maps include those induced by complex multiplication (that is, $\R^2 \times \R^{2k} \to \R^{2k}$) and similarly for quaternionic and octonionic multiplication.

Since a nonsingular bilinear map induces a nonsingular biskew map, Lemma \ref{lem:nonsingular} may be used for existence results for bilinear and biskew maps, and Lemma \ref{lem:binary} may be used for nonexistence results for bilinear and biskew maps.

\subsection{Coupled embeddings induced by embeddings}

We first observe the following.

\begin{lemma}
\label{lem:nonpar}
A bilinear map $B \colon \R^{m+1} \times \R^{n+1} \to \R^d$ is a coupled embedding if and only if $B$ is nonsingular.
\end{lemma}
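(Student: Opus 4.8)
The plan is to unwind both notions into statements about the values of $B$ and exploit that a bilinear map is linear in each variable separately. It suffices to establish the two implications: (a) if $B$ is a coupled nonembedding then $B$ is singular, and (b) if $B$ is singular then $B$ is a coupled nonembedding; together these say that ``$B$ is a coupled embedding'' is equivalent to ``$B$ is nonsingular''.

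For (a), suppose points $x_1 \neq x_2$ in $\R^{m+1}$, $y_1 \neq y_2$ in $\R^{n+1}$, and a (not necessarily orthogonal) decomposition $\R^d = V_1 \oplus V_2$ with coordinate projections $\pi_1 \colon \R^d \to V_1$ and $\pi_2 \colon \R^d \to V_2$ witness an axis-aligned parallelogram. Set $u = x_1 - x_2 \neq 0$ and $v = y_1 - y_2 \neq 0$. Linearity of $B$ in the first argument rewrites the two ``horizontal'' conditions $\pi_1 B(x_1, y_i) = \pi_1 B(x_2, y_i)$ as $\pi_1 B(u, y_i) = 0$ for $i = 1, 2$; subtracting and using linearity in the second argument gives $\pi_1 B(u, v) = 0$. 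Symmetrically, the two ``vertical'' conditions give $\pi_2 B(x_j, v) = 0$ for $j = 1, 2$, whose difference is $\pi_2 B(u, v) = 0$. Since $\ker \pi_1 \cap \ker \pi_2 = V_2 \cap V_1 = \{0\}$, we conclude $B(u, v) = 0$ with $u, v \neq 0$, i.e., $B$ is singular.

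For (b), suppose $B(u, v) = 0$ with $u, v \neq 0$. Bilinearity forces $B$ to vanish on the entire coordinate plane spanned by $u$ and $v$: $B(au, bv) = ab\, B(u,v) = 0$ for all scalars $a, b$, and in particular $B(u, 0) = B(0, v) = B(0, 0) = 0$. Hence, taking $x_1 = u$, $x_2 = 0$, $y_1 = v$, $y_2 = 0$ (legitimate since $x_1 \neq x_2$ and $y_1 \neq y_2$), all four images $B(x_i, y_j)$ equal $0$, so the parallelogram conditions hold trivially for any decomposition $\R^d = \R^k \times \R^{d-k}$, exhibiting an axis-aligned parallelogram. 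This is a bookkeeping argument with no real obstacle; the only points worth a moment's care are the one-variable-at-a-time use of bilinearity in (a) and, in (b), the fact that the witnessing parallelogram is fully degenerate, all four vertices sitting at the origin — which is permitted, since the definition of an axis-aligned parallelogram constrains only the parameters (distinct in each factor) and the coordinate equalities, not the positions of the images.
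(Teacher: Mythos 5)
Your proof is correct and follows essentially the same route as the paper's: bilinearity converts the axis-aligned parallelogram condition on $x_1,x_2,y_1,y_2$ into the single equation $B(x_1-x_2,y_1-y_2)=0$. The only difference is one of explicitness: where the paper invokes in one line the equivalence between admitting a parallelogram and the vanishing of the alternating sum $B(x_1,y_1)+B(x_2,y_2)-B(x_1,y_2)-B(x_2,y_1)$, you check both implications directly against the decomposition $\R^d = V_1 \oplus V_2$ --- and your fully degenerate parallelogram at the origin in part (b) neatly avoids having to produce a splitting adapted to the edge vectors.
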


\begin{proof} A map $B$ admits an axis-aligned parallelogram $x_1, x_2, y_1, y_2$ if and only if $B(x_1,y_1) + B(x_2,y_2) = B(x_2,y_1) + B(x_1,y_2)$, which is equivalent to the condition $B(x_1 - x_2, y_1 - y_2) = 0$ by bilinearity of $B$.
\end{proof}

In this sense coupled embeddability could be viewed as a natural generalization of nonsingularity for bilinear maps.

\begin{corollary}
\label{cor:dim}
Suppose that $e_X$ (resp.\ $e_Y$) is the minimum dimension of embeddability of $X$ (resp.~$Y$).  Then $d(X,Y)$ is at most the minimum dimension into which there exists a  nonsingular bilinear map from $\R^{e_X} \times \R^{e_Y}$.  In particular, $d(X,Y) \leq e_X + e_Y - 1$.
\end{corollary}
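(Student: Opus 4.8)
The plan is to realize the desired coupled embedding as a composite. I would fix embeddings $\iota_X \colon X \hookrightarrow \R^{e_X}$ and $\iota_Y \colon Y \hookrightarrow \R^{e_Y}$, let $B \colon \R^{e_X} \times \R^{e_Y} \to \R^d$ be any nonsingular bilinear map, and set $f = B \circ (\iota_X \times \iota_Y) \colon X \times Y \to \R^d$, where $(\iota_X \times \iota_Y)(x,y) = (\iota_X(x), \iota_Y(y))$. The claim is that $f$ is a coupled embedding; granting this, $d(X,Y) \le d$, and taking the least such $d$ yields the first assertion, while the ``in particular'' clause follows by plugging in a concrete $B$ from Lemma~\ref{lem:nonsingular}.

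To see that $f$ is a coupled embedding, suppose $x_1, x_2 \in X$ and $y_1, y_2 \in Y$ form an axis-aligned parallelogram for $f$ with respect to a decomposition $\R^d = \R^k \times \R^{d-k}$. The two defining equations that live in the first $k$ coordinates and the two that live in the last $d - k$ coordinates combine --- exactly as at the start of the proof of Lemma~\ref{lem:nonpar} --- to show that the vector $f(x_1,y_1) - f(x_2,y_1) - f(x_1,y_2) + f(x_2,y_2)$ vanishes in every coordinate, i.e.\ $f(x_1,y_1) + f(x_2,y_2) = f(x_2,y_1) + f(x_1,y_2)$. Writing $u_i = \iota_X(x_i)$ and $v_j = \iota_Y(y_j)$ and expanding by bilinearity of $B$ in each argument, this becomes $B(u_1 - u_2,\, v_1 - v_2) = 0$. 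Since $B$ is nonsingular, $u_1 = u_2$ or $v_1 = v_2$; since $\iota_X$ and $\iota_Y$ are injective, $x_1 = x_2$ or $y_1 = y_2$, contradicting that the four vertices of the parallelogram are distinct. Hence $f$ admits no axis-aligned parallelogram.

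For the numerical bound I would apply the first part of Lemma~\ref{lem:nonsingular} with $m + 1 = e_X$ and $n + 1 = e_Y$: polynomial multiplication is then a nonsingular bilinear map $\R^{e_X} \times \R^{e_Y} \to \R^{e_X + e_Y - 1}$, so the preceding paragraph gives $d(X,Y) \le e_X + e_Y - 1$.

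I expect no serious obstacle. The one step needing care is the reduction, in the second paragraph, of ``$f$ has an axis-aligned parallelogram with respect to \emph{some} decomposition $\R^d = \R^k \times \R^{d-k}$'' to the single coordinate-free identity $f(x_1,y_1) + f(x_2,y_2) = f(x_2,y_1) + f(x_1,y_2)$, since this is the only place where the precise wording of the definition is used; this implication holds for every decomposition and for arbitrary $f$, and is the ``easy'' half of the computation already made for Lemma~\ref{lem:nonpar}. One should also keep in mind the convention that an axis-aligned parallelogram has four distinct vertices, equivalently $x_1 \ne x_2$ and $y_1 \ne y_2$, which is what makes ``$x_1 = x_2$ or $y_1 = y_2$'' an actual contradiction.
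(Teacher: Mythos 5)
Your proposal is correct and is essentially the paper's intended argument: the corollary is meant to follow from Lemma~\ref{lem:nonpar} by exactly this composition of embeddings $\iota_X, \iota_Y$ with a nonsingular bilinear map, with the parallelogram condition collapsing to $B(u_1-u_2, v_1-v_2)=0$ as in the proof of that lemma, and the numerical bound coming from polynomial multiplication as in Lemma~\ref{lem:nonsingular}. Your explicit attention to the reduction from ``some decomposition'' to the coordinate-free identity, and to the nondegeneracy convention $x_1 \neq x_2$, $y_1 \neq y_2$, fills in details the paper leaves implicit but introduces nothing different in substance.
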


We will use this result frequently to obtain upper bounds on the quantity $d(X,Y)$.  However, we also note that this result justifies the original focus on dimensions $d$ of the form $m + n$, such that $X$ does not embed in $\R^m$ and $Y$ does not embed in $\R^n$.

\subsection{Biskew maps and Problem \ref{prob:spheres}}
\label{sec:spheres}

Here we address the fact that we have stated two problems for maps $S^m \times S^n$, one which asks for general axis-aligned parallelograms, and one which asks for axis-aligned parallelograms of the specific form $\pm x$, $\pm y$.  This subsection addresses the latter problem, which may be completely answered in terms of nonsingular biskew maps $S^m \times S^n \to \R^d$.  To avoid confusion, we say that a map $S^m \times S^n \to \R^d$ is a \emph{coupled} $\Z/2$-\emph{embedding} if it has no axis-aligned parallelograms of the specific form $\pm x$, $\pm y$.

\begin{proposition}
There exists a coupled $\Z/2$-embedding $S^m \times S^n \to \R^d$ if and only if there exists a nonsingular biskew map $S^m \times S^n \to \R^d$.
\end{proposition}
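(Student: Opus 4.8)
The plan is to route both directions through the \emph{biskew part} of a map,
\[
g(x,y) \;=\; \tfrac{1}{4}\big(f(x,y) - f(-x,y) - f(x,-y) + f(-x,-y)\big),
\]
which satisfies $g(-x,y) = g(x,-y) = -g(x,y)$ and so is biskew. Two observations about the configuration $x_1 = x$, $x_2 = -x$, $y_1 = y$, $y_2 = -y$ drive everything; write $A = f(x,y)$, $B = f(-x,y)$, $C = f(x,-y)$, $D = f(-x,-y)$, so that $4g(x,y) = A - B - C + D$. First, for \emph{any} $f$: if $\pm x, \pm y$ form an axis-aligned parallelogram with respect to some splitting $\R^d = V_1 \oplus V_2$, then writing $A-B-C+D$ both as $(A-B)-(C-D)$ and as $(A-C)-(B-D)$ and using the four defining equalities, one checks $A-B-C+D \in V_1 \cap V_2 = 0$, so $g(x,y) = 0$ is a necessary condition. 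Second, conversely, if $g(x,y) = 0$ then $A,B,C,D$ form an affine parallelogram with edge vectors $u := A - B = C - D$ and $v := A - C = B - D$, and any splitting $\R^d = V_1 \oplus V_2$ with $v \in V_1$ and $u \in V_2$ turns $\pm x, \pm y$ into an axis-aligned parallelogram --- such a splitting exists precisely when $u$ and $v$ are not both nonzero and parallel.

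The reverse implication is now immediate: apply the first observation to $f = g$ with $g$ a nonsingular biskew map. The four values at $\pm x, \pm y$ are $g(x,y), -g(x,y), -g(x,y), g(x,y)$, so an axis-aligned parallelogram would force $g(x,y) = 0$, impossible. Hence a nonsingular biskew map is itself a coupled $\Z/2$-embedding.

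For the forward implication, let $f$ be a coupled $\Z/2$-embedding and consider its biskew part $g$. If $g(x_0,y_0) = 0$, then $A,B,C,D$ form a parallelogram; provided its edge vectors $u,v$ are linearly independent or one of them is $0$, the second observation yields an axis-aligned parallelogram $\pm x_0, \pm y_0$, contradicting the hypothesis on $f$. So either $g$ has no zeros --- and is then the desired nonsingular biskew map --- or every zero of $g$ is \emph{degenerate}, meaning $u$ and $v$ are both nonzero and parallel, so that $A, B, C, D$ are collinear and no splitting of $\R^d$ separates the two edges.

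Disposing of this degenerate case is the main obstacle, and it is a real one: there are coupled $\Z/2$-embeddings with singular biskew part (one can produce such an $f$ already on $S^0 \times S^0$, and also on $S^1 \times S^1 \to \R^2$), so one must then exhibit a \emph{different} nonsingular biskew map. When $m + n < d$ nothing is needed, since a biskew map $S^m \times S^n \to \R^d$ in general position misses $0$; the real content lies in the range $m + n \ge d$. There I would perturb $f$: adding a small biskew map $\beta$ shifts both $u$ and $v$ by the same vector $2\beta(x,y)$ and so drives the pair $(u,v)$ off the ``collinear'' locus in $\R^d \oplus \R^d$ (which has codimension $d - 1$), while a compensating perturbation of the even part of $f$ preserves membership in the class of coupled $\Z/2$-embeddings. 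Arranging $g$ transverse to $0$ and $(x,y) \mapsto (u(x,y), v(x,y))$ transverse to the collinear locus, a dimension count on $g^{-1}(0)$ then forces either a nondegenerate zero of $g$ --- giving a parallelogram and hence a contradiction --- or $g^{-1}(0) = \varnothing$, so that $g$ is nonsingular. Making these perturbations legitimate while staying inside the class of coupled $\Z/2$-embeddings, or instead giving a direct equivariant-topological argument that the collinear degeneracy cannot occur at every zero of $g$ at once, is the crux.
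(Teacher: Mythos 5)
Your reverse implication is complete, correct, and identical to the paper's: the paper introduces $\Phi_f(x,y) = f(x,y)+f(-x,-y)-f(x,-y)-f(-x,y)$ (your $4g$), notes that an axis-aligned parallelogram at $\pm x,\pm y$ forces $\Phi_f(x,y)=0$, and observes that for a nonsingular biskew $f$ one has $\Phi_f = 4f \ne 0$. For the forward implication the paper's entire argument is the assertion that $f$ is a coupled $\Z/2$-embedding \emph{if and only if} $\Phi_f$ avoids $0$, so that $\Phi_f$ itself is the required nonsingular biskew map. You have correctly isolated the one nontrivial point hidden in that ``if and only if'': when $\Phi_f(x,y)=0$, the four values form an affine parallelogram with edge vectors $u = f(x,y)-f(-x,y)$ and $v = f(x,y)-f(x,-y)$, and a direct-sum splitting placing $u$ in one factor and $v$ in the other exists precisely when $u$ and $v$ are not both nonzero and parallel. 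So a zero of $\Phi_f$ need not, on its face, produce an axis-aligned parallelogram, and this is a legitimate concern about the paper's one-line ``observe.''

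The gap is that you do not resolve this degenerate case, and the repair you sketch would not work as described. A perturbation of the ``even part'' of $f$ is vacuous here: $\Phi_f$, $u$, and $v$ are all built from the three non-invariant isotypic components of $f$ (explicitly $\Phi_f = 4f_{--}$, $u = 2(f_{-+}+f_{--})$, $v = 2(f_{+-}+f_{--})$), so the $(\Z/2)^2$-invariant summand never enters and cannot ``compensate'' for anything. Adding a small biskew $\beta$ moves the zero set of $\Phi_f$ and can create new, nondegenerate zeros; to turn such a zero into the contradiction you want, the perturbed map must still be a coupled $\Z/2$-embedding, which is exactly what the perturbation may have destroyed --- the argument is circular at that point. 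Finally, even granting transversality, avoiding the collinear locus (codimension $d-1$ in $\R^d\oplus\R^d$) along the $(m+n-d)$-dimensional zero set of $\Phi_f$ only succeeds when $m+n < 2d-1$, so part of the interesting range is untouched. What is missing, both from your write-up and from a fully spelled-out version of the paper's proof, is an argument that a coupled $\Z/2$-embedding cannot have only degenerate (collinear) zeros of $\Phi_f$, or else a different construction of a nonsingular biskew map from such an $f$.
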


\begin{proof}
Given $f \colon S^m \times S^n \to \R^d$,  for $m, n \geq 1$, define
\[
\Phi_f \colon S^m \times S^n \to \R^d \colon (x,y) \mapsto f(x,y) + f(-x,-y) - f(x,-y) - f(-x,y),
\]
and observe that $f$ is a coupled $\Z/2$-embedding if and only if the $(\Z/2)^2$-equivariant map $\Phi_f$ avoids~$0$.  This establishes the backward implication, and if $f$ is a nonsingular biskew map, then $\Phi_f = 4f$ is nonsingular.
\end{proof}

\begin{corollary}
If $m$ and $n$ do not share a one in any digit of their binary expansions, then there is no coupled $\Z/2$-embedding $S^m \times S^n \to \R^{m+n}$.
\end{corollary}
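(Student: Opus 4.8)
The plan is to deduce this directly from the two immediately preceding results, with essentially no additional work. First I would invoke the Proposition: a coupled $\Z/2$-embedding $S^m \times S^n \to \R^{m+n}$ exists if and only if a nonsingular biskew map $S^m \times S^n \to \R^{m+n}$ exists. Then I would apply Lemma \ref{lem:binary}, whose hypothesis is exactly the binary-digit condition in the statement: under it, every biskew map $S^m \times S^n \to \R^{m+n}$ hits $0$, so no nonsingular biskew map exists. Chaining the two statements yields the claim.

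If one prefers an argument that does not route through the ``only if'' direction of the Proposition, one can argue by contradiction directly. Given a coupled $\Z/2$-embedding $f \colon S^m \times S^n \to \R^{m+n}$, the auxiliary map $\Phi_f(x,y) = f(x,y) + f(-x,-y) - f(x,-y) - f(-x,y)$ from the proof of the Proposition is $(\Z/2)^2$-equivariant with each generator acting by negation on $\R^{m+n}$ — that is, a biskew map — and it avoids $0$ precisely because $f$ admits no axis-aligned parallelogram of the form $\pm x$, $\pm y$. This contradicts Lemma \ref{lem:binary}. (For $m=0$ or $n=0$ the statement is immediate by inspection, since the codomain or the relevant coordinate projection is trivial, though one should note that the reduction via $\Phi_f$ in Section \ref{sec:spheres} is stated for $m,n\ge 1$.)

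There is no real obstacle here: all the substantive content resides in Lemma \ref{lem:binary}, whose Hopf-style cohomological proof is sketched in this section, and in the elementary identity underlying $\Phi_f$. The corollary is recorded mainly to make explicit that the classical number-theoretic obstruction of Lemma \ref{lem:binary} already forbids even the most symmetric type of axis-aligned parallelogram — the one of the form $\pm x$, $\pm y$ — on $S^m \times S^n$ in the borderline dimension $m+n$.
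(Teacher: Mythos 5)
Your proposal is correct and matches the paper's intended argument exactly: the corollary is stated without proof precisely because it follows immediately by combining the preceding Proposition (coupled $\Z/2$-embeddings exist iff nonsingular biskew maps exist) with Lemma \ref{lem:binary}. Your alternative direct argument via $\Phi_f$ is just an unwinding of the same two steps, so there is nothing further to add.
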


The next corollary is a consequence of Lemma \ref{lem:nonsingular}.

\begin{corollary}
If $m$ and $n$ are both odd, then there exists a coupled $\Z/2$-embedding $S^m \times S^n \to \R^{m+n}$.
\end{corollary}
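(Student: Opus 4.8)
The plan is to deduce this directly from the machinery already assembled, specifically the displayed equivalence between coupled $\Z/2$-embeddings and nonsingular biskew maps (the Proposition immediately preceding), together with the second half of Lemma \ref{lem:nonsingular}. First I would recall that a nonsingular bilinear map $B\colon\R^{m+1}\times\R^{n+1}\to\R^d$ induces a nonsingular biskew map $S^m\times S^n\to\R^d$ simply by restricting $B$ to the product of unit spheres: bilinearity gives the $(\Z/2)^2$-equivariance (negating either argument negates the output), and nonsingularity guarantees that $B(x,y)\neq 0$ whenever $x\neq 0$ and $y\neq 0$, in particular for $x\in S^m$, $y\in S^n$. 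This reduction is already flagged in the paragraph following Lemma \ref{lem:nonsingular}.

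Next I would invoke Lemma \ref{lem:nonsingular}: when $m$ and $n$ are both odd, there exists a nonsingular bilinear map $\R^{m+1}\times\R^{n+1}\to\R^{m+n}$ (obtained from complex polynomial multiplication, since $m+1$ and $n+1$ are then even). Restricting to spheres produces a nonsingular biskew map $S^m\times S^n\to\R^{m+n}$. Finally, by the backward implication of the preceding Proposition, the existence of such a nonsingular biskew map yields a coupled $\Z/2$-embedding $S^m\times S^n\to\R^{m+n}$, which is exactly the claim.

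Since every ingredient is already in place, there is essentially no obstacle here; the only point requiring any care is the bookkeeping in applying Lemma \ref{lem:nonsingular} with the correct parity — one must note that $m,n$ odd means the complex-multiplication construction applies to $\R^{m+1}\times\R^{n+1}$, landing in $\R^{m+n}$ rather than $\R^{m+n+1}$. The statement is thus an immediate corollary, as its placement in the text indicates.
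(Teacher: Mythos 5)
Your argument is exactly the paper's intended one: the corollary is stated as an immediate consequence of Lemma \ref{lem:nonsingular} (complex polynomial multiplication giving a nonsingular bilinear map $\R^{m+1}\times\R^{n+1}\to\R^{m+n}$ for $m,n$ odd), combined with the induced nonsingular biskew map and the backward implication of the preceding Proposition. The proposal is correct and complete.
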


For example, there is no coupled $\Z/2$-embedding $S^1 \times S^2 \to \R^3$,  but there are coupled $\Z/2$-embeddings $S^1 \times S^2 \to \R^4$ and $S^1 \times S^1 \to \R^2$.   Further specific examples can be written using Corollary \ref{cor:dim} and the existence of nonsingular bilinear maps (see e.g.\ the table in \cite{BergerFriedland}).

\begin{remark}
It is unclear whether the two coupled embedding formulations for $S^m \times S^n$ are equivalent.  For example, Gitler and Lam showed in \cite{GitlerLam} that there exists a nonsingular biskew map $S^{27} \times S^{12} \to \R^{32}$, but that there is no nonsingular bilinear map $\R^{28} \times \R^{13} \to \R^{32}$.  Thus there is a coupled $\Z/2$ embedding $S^{27} \times S^{12} \to \R^{32}$ but we do not know whether there exists a map $S^{27} \times S^{12} \to \R^{32}$ with no general axis-aligned parallelogram.
\end{remark}

\subsection{Basic lower bounds}

Having disposed of Problem \ref{prob:spheres}, we now return to the more general Problem~\ref{prob:mindim}: given topological spaces $X$ and $Y$, we seek the minimum dimension $d(X,Y)$ such that there exists a coupled embedding $X \times Y \to \R^d$.  We have seen that $d(X,Y) \leq e_X + e_Y - 1$, where $e_X$ and $e_Y$ are the minimum embedding dimensions of $X$ and $Y$.  Here we present some results which give basic lower bounds for $d(X,Y)$.

Let $F_2(X)$ represent the (ordered) configuration space consisting of ordered pairs of unequal points of $X$, $F_2(X) = X \times X - \Delta X = \left\{ (x_1,x_2) \in X \times X \ \big| \ x_1 \neq x_2 \right\}$.

\begin{lemma} If there exists a coupled embedding $f \colon X \times Y \to \R^d$, then there exists a $(\Z/2)^2$-equivariant map $F_2(X) \times F_2(Y) \to \R^d$ which avoids zero.
\end{lemma}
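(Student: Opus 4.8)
The plan is to build the desired equivariant map directly out of the coupled embedding $f$, by the same device used in the proof of the proposition on $\Z/2$-embeddings, but with the reflections $x \mapsto -x$ and $y \mapsto -y$ replaced by the two coordinate swaps on $X \times X$ and $Y \times Y$. Concretely, I would set
\[
\Phi \colon F_2(X) \times F_2(Y) \to \R^d, \qquad \Phi\big((x_1,x_2),(y_1,y_2)\big) = f(x_1,y_1) + f(x_2,y_2) - f(x_2,y_1) - f(x_1,y_2).
\]
Here the first $\Z/2$ acts on $F_2(X)$ by swapping $x_1 \leftrightarrow x_2$ and the second acts on $F_2(Y)$ by swapping $y_1 \leftrightarrow y_2$; in both cases the generator acts on $\R^d$ by negation (one checks that swapping $x_1,x_2$ in the formula multiplies $\Phi$ by $-1$, and likewise for $y_1,y_2$), so $\Phi$ is $(\Z/2)^2$-equivariant as required.

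The key remaining step is to show that $\Phi$ avoids $0$. Suppose $\Phi\big((x_1,x_2),(y_1,y_2)\big) = 0$; then
\[
f(x_1,y_1) + f(x_2,y_2) = f(x_2,y_1) + f(x_1,y_2).
\]
I would then invoke the same linear-algebra observation that appears in the proof of Lemma~\ref{lem:nonpar}: a vector identity of the form $v_{11} + v_{22} = v_{21} + v_{12}$ in $\R^d$ says exactly that the four points $v_{ij} = f(x_i,y_j)$ are the vertices of a (possibly degenerate) parallelogram, hence lie in an affine plane with $v_{11}-v_{21}$ and $v_{12}-v_{22}$ a pair of equal (parallel) edge vectors and $v_{11}-v_{12}$, $v_{21}-v_{22}$ the other pair. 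One then has to produce a splitting $\R^d = \R^k \times \R^{d-k}$ witnessing the axis-aligned parallelogram condition: take the first factor to be (a complement adapted to) the span of one edge vector, say $w_1 = v_{11}-v_{21}$, and the second factor a complement adapted to the other edge $w_2 = v_{11}-v_{12}$. Since $x_1 \neq x_2$ and $y_1 \neq y_2$, the pair $(x_1,x_2,y_1,y_2)$ is a genuine quadruple of the type in the definition of axis-aligned parallelogram. This contradicts the assumption that $f$ is a coupled embedding, so $\Phi$ never vanishes.

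The main obstacle — and the only place requiring care — is the bookkeeping in that last step: handling the degenerate cases where $w_1$ and $w_2$ are linearly dependent (including the case where one or both vanish), so that one cannot naively take the two factors of the decomposition to be transverse subspaces each containing one edge. I expect this is exactly the content already absorbed into the remark in the introduction that "if there exists any axis-aligned parallelogram for any $k$, then there exists one with respect to some decomposition $\R^d = \R^\ell \times \R^{d-\ell}$ for each $\ell$, by allowing each factor to contain the span of one edge"; the definition of axis-aligned parallelogram only demands \emph{some} decomposition (not necessarily orthogonal, and with unspecified $k$), so in the degenerate cases one simply picks any decomposition whose two factors contain $w_1$ and $w_2$ respectively, which is always possible. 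Once this is noted, the equivalence "$\Phi((x_1,x_2),(y_1,y_2)) = 0$ $\iff$ $(x_1,x_2,y_1,y_2)$ forms an axis-aligned parallelogram" holds verbatim, and the lemma follows.
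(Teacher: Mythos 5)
Your proposal is correct and is essentially identical to the paper's proof: the paper defines exactly the same map $\Phi_f(x_1,x_2,y_1,y_2) = f(x_1,y_1) + f(x_2,y_2) - f(x_1,y_2) - f(x_2,y_1)$, notes it is $(\Z/2)^2$-equivariant under the two coordinate swaps, and concludes it avoids $0$ because a zero of $\Phi_f$ is precisely an axis-aligned parallelogram. The equivalence between the vanishing of this alternating sum and the parallelogram condition, including the degenerate cases of the decomposition, is exactly what the paper absorbs into its introductory remark and the proof of Lemma~\ref{lem:nonpar}, so your extra bookkeeping matches the paper's implicit treatment.
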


\begin{proof}
Define
\[
\Phi_f \colon F_2(X) \times F_2(Y) \to \R^d \colon (x_1,x_2,y_1,y_2) \mapsto f(x_1,y_1) + f(x_2,y_2) - f(x_1,y_2) - f(x_2,y_1).
\]
The map $\Phi_f$ is $(\Z/2)^2$-equivariant, and if $f$ is a coupled embedding, there is no axis-aligned parallelogram, hence $\Phi_f$ avoids $0$.
\end{proof}

\begin{remark} We observe that if $f$ is a coupled embedding, then the map
\[
X \times F_2(Y) \to \R^d \colon (x,y_1,y_2) \mapsto f(x,y_1) - f(x,y_2)
\]
is an embedding of $X$ for fixed $y_1,y_2$, and the map
\[
F_2(X) \times Y \to \R^d \colon (x_1,x_2,y) \mapsto f(x_1,y) - f(x_2,y)
\]
is an embedding of $Y$ for fixed $x_1,x_2$. 
\end{remark}

\begin{corollary}
\label{cor:topdim}
Let $X$ and $Y$ be topological manifolds.  If there exists a coupled embedding $X~\times~Y~\to~\R^d$, then there exists a nonsingular biskew map $S^{\dim(X)-1} \times S^{\dim(Y)-1} \to \R^d$.
\end{corollary}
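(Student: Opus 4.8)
The plan is to combine the preceding lemma with a standard equivariant map from a sphere into the ordered configuration space of a manifold. First I would apply that lemma to the given coupled embedding $f \colon X \times Y \to \R^d$, obtaining a map $\Phi_f \colon F_2(X) \times F_2(Y) \to \R^d$ that avoids $0$ and is $(\Z/2)^2$-equivariant, where the two copies of $\Z/2$ act by swapping the two points of the pair in $F_2(X)$ and in $F_2(Y)$ respectively, and each generator acts by negation on $\R^d$ (this sign is immediate from the formula defining $\Phi_f$).

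Next, writing $m = \dim(X)$ and $n = \dim(Y)$, I would produce a $\Z/2$-equivariant map $g_X \colon S^{m-1} \to F_2(X)$, with the antipodal action on $S^{m-1}$ and the swap action on $F_2(X)$. To do this, choose an interior point $x_0 \in X$ together with a chart identifying a neighborhood of $x_0$ with an open set of $\R^m$ containing the closed unit ball, and set $g_X(v) = (v, -v)$ for $v \in S^{m-1}$. Since $v \neq -v$ this lands in $F_2(X)$, and $g_X(-v) = (-v, v)$ is exactly the swap of $g_X(v)$, so $g_X$ is equivariant. Construct $g_Y \colon S^{n-1} \to F_2(Y)$ in the same way. (When $m = 0$, so $S^{m-1} = \emptyset$, there is nothing to prove; similarly for $n$.)

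Finally, the composite
\[
\Phi_f \circ (g_X \times g_Y) \colon S^{m-1} \times S^{n-1} \to \R^d
\]
is $(\Z/2)^2$-equivariant with each generator negating $\R^d$ — that is, a biskew map — and it avoids $0$ because $\Phi_f$ avoids $0$ on all of $F_2(X) \times F_2(Y)$, in particular on the image of $g_X \times g_Y$. Hence it is a nonsingular biskew map $S^{\dim(X)-1} \times S^{\dim(Y)-1} \to \R^d$, which is what we wanted. The argument is essentially routine; the only point requiring care is matching the $\Z/2$-actions correctly under each of the maps $g_X$, $g_Y$, and $\Phi_f$, and recording the negation sign on the codomain so that ``biskew'' is literally satisfied.
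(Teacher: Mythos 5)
Your proposal is correct and follows essentially the same route as the paper: apply the preceding lemma to get the nonvanishing $(\Z/2)^2$-equivariant map $\Phi_f$, embed $S^{\dim(X)-1}$ into $X$ (your chart construction is just an explicit choice of the embedding $\varphi$ the paper invokes) to obtain the equivariant map $v \mapsto (\varphi(v),\varphi(-v))$ into $F_2(X)$, and restrict. The equivariance bookkeeping you spell out is exactly what the paper leaves implicit.
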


\begin{proof}  There is an embedding $\varphi \colon S^{\dim(X) - 1} \to X$,  hence a $\Z/2$-equivariant map $S^{\dim(X) - 1}~\to~F_2(X)$ sending $x \mapsto (\varphi(x),\varphi(-x))$, and similarly for $Y$.  A coupled embedding $f \colon X \times Y \to \R^d$ induces a nonsingular $(\Z/2)^2$-equivariant map $\Phi_f$, which by restriction, yields a nonsingular biskew map $S^{\dim(X)-1} \times S^{\dim(Y)-1} \to \R^d$.
\end{proof}

It is possible that a sphere of larger dimension than $\dim(X) - 1$ maps $\Z/2$-equivariantly into~$F_2(X)$.  It is useful to introduce the following terminology.

\begin{definition}  Let $Z$ be a space with a free $\Z/2$-action.  The \emph{$\Z/2$-coindex} of $Z$, denoted by~$\coind(Z)$, is the dimension of the largest-dimensional sphere which maps $\Z/2$-equivariantly into~$Z$.
\end{definition}

In this language, the Borsuk-Ulam theorem is (the nontrivial part of) the statement $\coind(S^k) =~k$.  The proof of Corollary \ref{cor:topdim} shows more generally:

\begin{proposition} If there exists a coupled embedding $X \times Y \to \R^d$, then there exists a nonsingular biskew map $S^{\coind(F_2(X))} \times S^{\coind(F_2(Y))} \to \R^d$.
\end{proposition}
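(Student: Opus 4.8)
The plan is to imitate the proof of Corollary~\ref{cor:topdim}, replacing the embedded antipodal sphere $S^{\dim X - 1} \hookrightarrow X$ by a sphere of the \emph{largest} dimension admitting a $\Z/2$-equivariant map into $F_2(X)$, and similarly for $Y$. First I would recall that a coupled embedding $f \colon X \times Y \to \R^d$ produces, via the construction in the preceding lemma, a $(\Z/2)^2$-equivariant map
\[
\Phi_f \colon F_2(X) \times F_2(Y) \to \R^d, \qquad (x_1,x_2,y_1,y_2) \mapsto f(x_1,y_1) + f(x_2,y_2) - f(x_1,y_2) - f(x_2,y_1),
\]
which avoids $0$ precisely because $f$ admits no axis-aligned parallelogram. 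Here the first generator of $(\Z/2)^2$ swaps $x_1 \leftrightarrow x_2$ and the second swaps $y_1 \leftrightarrow y_2$; each such swap negates $\Phi_f$.

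Next, set $a = \coind(F_2(X))$ and $b = \coind(F_2(Y))$. By definition of the $\Z/2$-coindex there are $\Z/2$-equivariant maps $\alpha \colon S^a \to F_2(X)$ and $\beta \colon S^b \to F_2(Y)$, where $S^a, S^b$ carry the antipodal action. Then $\alpha \times \beta \colon S^a \times S^b \to F_2(X) \times F_2(Y)$ is equivariant for the product actions, and the composite
\[
\Phi_f \circ (\alpha \times \beta) \colon S^a \times S^b \to \R^d
\]
is $(\Z/2)^2$-equivariant and avoids $0$. Since each generator of $(\Z/2)^2$ acts on $\R^d$ by negation through $\Phi_f$, this composite is by definition a nonsingular biskew map $S^a \times S^b \to \R^d$, which is the claim.

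The argument has essentially no hard step — it is pure functoriality of the zero-avoidance condition under equivariant maps. The only points deserving a line of care are: (i) that the coindex is actually \emph{attained} by a sphere of that dimension, which is built into the definition as stated (one takes the largest-dimensional such sphere); and (ii) bookkeeping the two $\Z/2$-factors, namely that swapping $x_1 \leftrightarrow x_2$ in $\Phi_f$ sends $\Phi_f \mapsto -\Phi_f$ (and likewise for the $y$-swap), so that the composite is biskew rather than merely equivariant for some other action. Corollary~\ref{cor:topdim} is recovered as the special case $\alpha(x) = (\varphi(x), \varphi(-x))$ for an embedding $\varphi \colon S^{\dim X - 1} \hookrightarrow X$, which already shows $\coind(F_2(X)) \ge \dim X - 1$; the proposition is the same argument run with the optimal equivariant sphere in place of this one.
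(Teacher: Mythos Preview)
Your proof is correct and is precisely the argument the paper intends: it states the proposition immediately after noting that ``the proof of Corollary~\ref{cor:topdim} shows more generally,'' and your write-up just makes explicit the substitution of the optimal equivariant spheres $S^{\coind(F_2(X))}$ and $S^{\coind(F_2(Y))}$ for the dimension-$(\dim X - 1)$ and $(\dim Y - 1)$ spheres in that proof.
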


\section{Coupled embeddings: existence}
\label{sec:existence}

In this section we address the existence question for coupled embeddings of smooth manifolds.  Let $M$ be a smooth manifold of dimension $p$.  Recall the weak Whitney theorems, which state that a generic map $M \to \R^{2p+1}$ is an embedding, and that a generic map $M \to \R^{2p}$ is an immersion; and the strong Whitney theorems,  which state that every smooth $p$-manifold $M$ embeds in $\R^{2p}$ and immerses into $\R^{2p-1}$.   We can make similar statements for coupled embeddings; however, the application is somewhat tricky because the coupled embedding condition is neither a condition on jet spaces nor on multijet spaces, where most functional conditions live.

For a reference on jet bundles, we recommend the book by Golubitsky and Guillemin \cite{GolubitskyGuillemin}.  We offer only some brief intuition.  Jet bundles are defined to mimic Taylor series on manifolds in a coordinate-free way.   The $k$-jet bundle $J^k(M, \R^d) \to M$ contains information about possible derivatives, up to order $k$, of functions $f \in C^\infty(M,\R^d)$.  The $k$-jet extension of $f$ is the section $j^kf \colon M \to J^k(M,\R^d)$, such that $j^kf(x)$ describes the Taylor expansion of $f$ at $x$, up to order $k$, in an invariant way.  Jet bundles allow for easy formalization of genericity statements such as the weak Whitney theorem on immersions.

Now consider the projection $\pi \colon J^k(M, \R^d) \times \cdots \times J^k(M,\R^d) \to M \times \cdots \times M$ ($s$ factors each), and let $J_s^k(M,\R^d) = \pi^{-1}(F_s(M))$.  The bundle $\pi \colon J_s^k(M,\R^d) \to F_s(M)$ is the \emph{$s$-fold $k$-multijet bundle} and allows for the study of differential conditions at $s$-tuples of points.   The $s$-fold $k$-multijet extension of $f$ is the section $j_s^k f(x_1,\dots,x_s) = (j^kf(x_1),\dots,j^kf(x_s))$, which simultaneously contains the information of derivatives up to order $k$ at $s$ distinct points of $M$.  Multijet bundles allow for easy formalization of genericity statements for conditions at pairs or $k$-tuples of points, such as the weak Whitney theorem on embeddings.

\subsection{Genericity statements for coupled embeddings}

Let $M$ and $N$ be smooth manifolds and let $Z = M \times N$.  The $2$-fold $0$-multijet space $J^0_2(Z, \R^d)$ is just $F_2(Z) \times \R^d \times \R^d$, and for $f \in C^\infty(Z, \R^d)$,  and the $2$-fold $0$-multijet extension of $f$ is $j_2^0 f(x_1,y_1,x_2,y_2) = (x_1,y_1,x_2,y_2,f(x_1,y_1),f(x_2,y_2))$.  This space is useful for studying embeddings $Z \to \R^d$.

We define a similar space which is useful for studying the coupled embedding condition.   Let $\Delta M$ be the diagonal in $M \times M$. Let $\Delta = (\Delta M \times N^2) \cup (M^2 \times \Delta N) \subset Z \times Z$, and let $C$ be its complement.
Consider the projection $\alpha \colon Z \times Z \times \R^{4d} \to Z \times Z$, and let $J = \alpha^{-1}(C) = C \times \R^{4d}$.  Given $f \in C^\infty(Z, \R^d)$, define the section
\[
\psi_f \colon C \to J \colon (x_1,y_1,x_2,y_2) \mapsto (x_1,y_1,x_2,y_2,f(x_1,y_1),f(x_2,y_2),f(x_1,y_2),f(x_2,y_1)).
\]

Recall that a subset $X \subset Y$ is \emph{residual} if it is a countable intersection of open dense subsets of~$Y$.

\begin{theorem}
\label{thm:notmultijet} Let $W$ be a submanifold of $J$.  Let
\[
T_W = \left\{ f \in C^\infty(Z, \R^d) \ \big| \ \psi_f \mbox{ is transverse to } W \right\}.
\]
Then $T_W$ is a residual subset of $C^\infty(Z,\R^d)$ in the Whitney $C^\infty$ topology.  Moreover, if $W$ is compact, then $T_W$ is open.
\end{theorem}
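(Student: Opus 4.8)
The plan is to mimic the proof of the parametric transversality theorem (Thom's jet transversality theorem), since the map $\psi_f$ is, by construction, a section of the trivial bundle $J = C \times \R^{4d} \to C$ whose value depends only on $f$ through its zeroth-order data at the four points $x_1,y_1,x_2,y_2$ — so no jet-bundle machinery is needed, only the elementary transversality lemma for families. First I would set up the auxiliary evaluation map
\[
F \colon C^\infty(Z,\R^d) \times C \to J, \qquad (f, x_1,y_1,x_2,y_2) \mapsto \psi_f(x_1,y_1,x_2,y_2).
\]
The key point is that $F$ is a submersion (onto $J$): fixing a point $(x_1,y_1,x_2,y_2)\in C$, at least one of the two coordinates differs in each factor, say $x_1 \neq x_2$ and $y_1\neq y_2$, so the four evaluation points $(x_1,y_1),(x_2,y_2),(x_1,y_2),(x_2,y_1)$ of $Z = M\times N$ are pairwise distinct; hence by using bump functions supported near each of the four points one can independently prescribe the four $\R^d$-values $f(x_1,y_1),f(x_2,y_2),f(x_1,y_2),f(x_2,y_1)$, which shows $dF$ is already surjective onto the $\R^{4d}$-directions, and it is obviously surjective onto the $C$-directions. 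The only subtlety in verifying this is the possibility that, say, $x_1 = x_2$ while $y_1\neq y_2$: then two of the evaluation points coincide, but then the two constraints on $f$ at that coincident $Z$-point are consistent (both demand the same value, namely $f(x_1,y_1)=f(x_2,y_1)$ and $f(x_1,y_2)=f(x_2,y_2)$), so one can still realize all of $\R^{4d}$; the definition of $C$ as the complement of $\Delta$ precisely rules out the bad case where both coordinates coincide.

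Second, granting that $F$ is a submersion, it is in particular transverse to the submanifold $W \subset J$, so $F^{-1}(W)$ is a submanifold of $C^\infty(Z,\R^d)\times C$. I then apply the parametric transversality theorem (in the Banach/Fréchet form valid for the Whitney $C^\infty$ topology, e.g.\ \cite{GolubitskyGuillemin}): for $f$ in a residual subset of $C^\infty(Z,\R^d)$, the restricted section $\psi_f = F(f,-)$ is transverse to $W$. Since $C$ is $\sigma$-compact (being an open subset of the manifold $Z\times Z$), one covers $C$ by countably many compact sets and intersects the corresponding residual sets, so $T_W$ is residual.

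Third, for the openness statement when $W$ is compact: transversality of $\psi_f$ to a compact $W$ is an open condition on $f$. Concretely, if $\psi_f$ is transverse to $W$, then (since $\psi_f$ is proper over any compact set and $W$ is compact) the set $\psi_f^{-1}(W)$ is compact in $C$; at each of its points transversality is an open condition on the $1$-jet data of $\psi_f$, which depends continuously on $f$ in the Whitney $C^1$ topology, and by compactness this persists under small perturbations. One also uses that $\psi_f(x_1,y_1,x_2,y_2)$ stays away from $W$ for $(x_1,y_1,x_2,y_2)$ near $\Delta$ (where $\psi_f$ escapes to infinity in an appropriate sense, or at least leaves a neighborhood of the compact $W$) — this is where compactness of $W$ together with the structure of $C$ as the complement of $\Delta$ is essential. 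I expect the submersivity of $F$ (the first step) to be the main obstacle to state cleanly, because of the need to handle the partial-coincidence cases $x_1=x_2$ or $y_1=y_2$ carefully; the residuality and openness arguments are then routine adaptations of the standard jet-transversality proof.
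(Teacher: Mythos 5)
Your overall strategy coincides with the paper's: the proof is the multijet-transversality argument of Golubitsky--Guillemin adapted to the four evaluation points $(x_1,y_1)$, $(x_2,y_2)$, $(x_1,y_2)$, $(x_2,y_1)$, with independence of bump-function perturbations at these points supplying surjectivity/density, a countable cover giving residuality, and compactness of $W$ giving openness. The paper phrases this by covering $\alpha(W)$ with rectangles $U_1\times V_1\times U_2\times V_2$ whose factors have disjoint closures rather than by writing down the evaluation map $F$ and citing parametric transversality, but these are the same argument.

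There is, however, one concrete error in your submersion step. You assert that $C$ only excludes the case where \emph{both} coordinates coincide, and that in the partial-coincidence case $x_1=x_2$, $y_1\neq y_2$ one ``can still realize all of $\R^{4d}$.'' Both claims are wrong. First, $\Delta = (\Delta M\times N^2)\cup(M^2\times\Delta N)$ is the locus where $x_1=x_2$ \emph{or} $y_1=y_2$, so its complement $C$ consists exactly of configurations with $x_1\neq x_2$ \emph{and} $y_1\neq y_2$; partial coincidences never occur in $C$, and the four evaluation points are always pairwise distinct there. Second, if a partial coincidence $x_1=x_2$ did occur, then the identities $f(x_1,y_1)=f(x_2,y_1)$ and $f(x_1,y_2)=f(x_2,y_2)$ would force the last four components of $\psi_f$ into a $2d$-dimensional affine subspace of $\R^{4d}$, so $F(\cdot,c)$ could not be submersive onto the fiber — exactly the degeneracy the definition of $C$ is designed to remove. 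The fix is simply to delete your ``subtlety'' paragraph and observe that on $C$ all four points are distinct; with that correction the rest of your outline (residuality via a countable cover by pieces with separated factors, openness from compactness of $\alpha(W)\subset C$) matches the paper and goes through.
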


The statement is very similar to that of the Multijet Transversality Theorem, and a proof can be obtained by making appropriate modifications to the proof in \cite[Theorem II.4.13]{GolubitskyGuillemin}.  We give the essential ideas of this argument.

\begin{proof}[Proof sketch]

First assume that $W$ is compact.  Cover the compact set $\alpha(W) \cap C$ by finitely many rectangles of the form $\left\{C_\beta\right\}$, where $C_\beta = U^\beta_1 \times V^\beta_1 \times U^\beta_2 \times V^\beta_2$ for open $U^\beta_i \subset M$ and open $V^\beta_i \subset N$ satisfying the property that for all $\beta$, $\overline{U^\beta_1} \cap \overline{U^\beta_2} = \emptyset$ and $\overline{V^\beta_1} \cap \overline{V^\beta_2} = \emptyset$.   Define
\[
T_\beta = \left\{ f \in C^\infty(Z, \R^d) \ \big| \ \psi_f  \mbox{ is transverse to } W \mbox{ on } \alpha^{-1}(C_\beta) \right\}.
\]
It is enough to show that each $T_\beta$ is open.  By usual transversality results the set
\[
\left\{ g \in C^\infty(C, J) \ \big| \ g \mbox{ is transverse to } W \mbox{ on } \alpha^{-1}(C_\beta) \right\}
\]
is open, and taking the preimage under the continuous map $\psi \colon C^\infty(Z, \R^d) \to C^\infty(C, J)$ gives openness of $T_\beta$.

For general $W$, we may choose an open cover $\left\{W_i\right\}$ such that $\overline{W_i} \subset W$ and that $\alpha(\overline{W_i})$ may be covered by a rectangle $U^i_1 \times V^i_1 \times U^i_2 \times V^i_2$ as above.  It is enough to show that
\[
T_{W_i} = \left\{ f \in C^\infty(Z, \R^d) \ \big| \ \psi_f  \mbox{ is transverse to } W \mbox{ on } \overline{W_i} \right\}
\]
is open and dense, since then $T_W$ is a countable intersection of open dense sets, as desired.

By the above result, each $T_{W_i}$ is open.  To show density, we observe that we may perturb a function $f \colon Z \to \R^d$, using bump functions on the four disjoint sets $U^i_1 \times V^i_1$, $U^i_1 \times V^i_2$, $U^i_2 \times V^i_1$, and $U^i_2 \times V^i_2$, to achieve the desired transversality on $\overline{W_i}$.
\end{proof}

\begin{proof}[Proof of Theorem \ref{thm:mainexist}(a)]
Define
\[
\Sigma = \left\{ (x_1,y_1,x_2,y_2,z_1,z_2,z_3,z_4) \in C \times \R^{4d} \ \big| \ z_1 + z_2 - z_3 - z_4 = 0 \right\}.
\]
Observe that $\Sigma$ has codimension $d$ and that $f$ is a coupled embedding if and only if $\psi_f$ is disjoint from $\Sigma$.  By Theorem \ref{thm:notmultijet}, $\psi_f$ is transverse to $\Sigma$ for the members $f$ of some residual set, which is dense by Baire category theorem.   For such $f$, $(\psi_f)^{-1}(\Sigma)$ is a submanifold of $C$ of codimension $d$.  If $d > \dim(C) = 2(p+q)$ this set is empty, hence $\psi_f$ is disjoint from $\Sigma$.
\end{proof}

\begin{remark} Observe that for $d \leq 2(p+q)$, the same argument can be used to compute the expected dimension of the set on which coupled embeddability fails.
\end{remark}

We compare this to a result we could have obtained more easily from Whitney's theorem.  A generic map $M \to \R^{2p+1}$ is an embedding, and a generic map $N \to \R^{2q+1}$ is an embedding.  There exists a nonsingular bilinear map $B : \R^{2p+1} \times \R^{2q+1} \to \R^{2p+2q+1}$.  Therefore, for generic maps $f \in C^{\infty}(M,\R^{2p+1})$,  $g \in C^\infty(N,\R^{2q+1})$, the map $B \circ (f \times g) \colon M \times N \to \R^{2p+1} \times \R^{2q+1} \to \R^{2(p+q)+1}$ is a coupled embedding.   Of course, maps of the form $f \times g$ are not generic in $C^\infty(M \times N, \R^{2(p+q)+1})$, so this line of thought could not have given Proposition \ref{thm:mainexist}(a).

\begin{proof}[Proof of Theorem \ref{thm:mainexist}(b)]
There exists a nonsingular bilinear map $B \colon \R^{2p} \times \R^{2q} \to \R^{2p+2q-2}$ from complex polynomial multiplication.  Apply $B$ to embeddings $f \colon M \to \R^{2p}$, $g \colon N \to \R^{2q}$, which exist due to the strong Whitney embedding theorem.
\end{proof}

Surprisingly, this existence result guarantees a coupled embedding in a space of dimension \emph{three} less  than that in which coupled embeddings are generic.   Intuitively, one might expect the transition from generic to non-generic to yield only one less dimension, as is the case for the Whitney embedding theorems.  Here, the stronger result can be attributed both to the ``coupled'' nature of the coupled embedding condition, and also due to the better existence result for nonsingular bilinear maps when both dimensions are even.

When $n$ is a power of $2$,  the strong Whitney embedding theorem is sharp, in the sense that $\R P^n$ does not embed in $\R^{2n-1}$.  We ask:

\begin{problem}
Do there exist $p>1$, $q>1$, and smooth manifolds $M$ and $N$ of dimensions $p$ and~$q$,  such that $d(M,N) = 2p+2q-2$?
\end{problem}

If $M$ embeds in $\R^{2p-1}$ and $N$ embeds in $\R^{2q-1}$, which in particular must occur when neither $p$ and $q$ are powers of $2$, then we apply a nonsingular bilinear map to achieve coupled embeddability in $\R^{2p+2q-3}$.  If $p>1$ and $q>1$ are both powers of $2$, then $M$ embeds in $\R^{2p}$, $N$ embeds in $\R^{2q}$, and we apply a nonsingular bilinear map $\R^{2p} \times \R^{2q} \to \R^{2p+2q-4}$, which exists due to quaternionic polynomial multiplication  (see Example \ref{ex:rp2}).  Thus, the remaining possibility is that (without loss of generality) $M$ embeds in dimension $2p$ and not less (so $p$ must be a power of $2$) and $N$ embeds in dimension $2q-1$ and not less.  For example, this occurs with $M = \R P^2$ and $N = \R P^3$.

\begin{example}
\label{ex:rp2}
There exists a coupled embedding $\R P^2 \times \R P^2 \to \R^4$, since there is a nonsingular bilinear map $\R^4 \times \R^4 \to \R^4$ induced by quaternionic multiplication.
\end{example}

\subsection{A Haefliger-type conjecture for coupled embeddings}

The following result is due to Haefliger.

\begin{theorem}[Haefliger \cite{Haefliger}]  Let $M$ be a smooth closed manifold of dimension $n$.  If $d > \frac32 n + \frac32$, then the existence of a differentiable embedding $M \to \R^d$ is equivalent to the existence of a $\Z/2$-equivariant map $g \colon M \times M \to \R^d$ such that $g^{-1}(0) = \Delta M$.
\end{theorem}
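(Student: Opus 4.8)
The plan is to prove the two implications separately; the forward implication is elementary, and the backward implication carries the weight, which I would deduce from the classical deleted‑product criterion for embeddability in the metastable range.

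\emph{Forward implication.} Suppose $e\colon M\to\R^d$ is a differentiable embedding. Give $M\times M$ the $\Z/2$-action that swaps the two factors and $\R^d$ the antipodal action $v\mapsto -v$, and set $g(x,y)=e(x)-e(y)$. Then $g(y,x)=-g(x,y)$, so $g$ is $\Z/2$-equivariant, and $g(x,y)=0$ iff $e(x)=e(y)$ iff $x=y$ by injectivity of $e$; hence $g^{-1}(0)=\Delta M$. (Note that any $\Z/2$-map $M\times M\to\R^d$ automatically carries the fixed set $\Delta M$ into the fixed point $0$, so the genuine content of the hypothesis is only that $g$ is nonzero off the diagonal.)

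\emph{Backward implication.} Given a $\Z/2$-map $g\colon M\times M\to\R^d$ with $g^{-1}(0)=\Delta M$, restrict it to the deleted product $F_2(M)=M\times M\setminus\Delta M$ and normalize: $\bar g=g/|g|\colon F_2(M)\to S^{d-1}$ is $\Z/2$-equivariant, where $\Z/2$ acts freely on $F_2(M)$ by swapping factors and antipodally on $S^{d-1}$. The existence of such a map is exactly the hypothesis of the Haefliger--Weber deleted‑product criterion: for a smooth closed $n$-manifold $M$ with $2d\ge 3(n+1)$, one has that $M$ embeds smoothly in $\R^d$ if and only if there is a $\Z/2$-map $F_2(M)\to S^{d-1}$. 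Since $d>\tfrac32 n+\tfrac32$ is precisely $2d>3(n+1)$, we are strictly inside the metastable range, and the criterion yields a smooth embedding $M\to\R^d$. It is worth recording that the two forms of the condition are in fact equivalent with no dimension hypothesis at all: given any $\Z/2$-map $h\colon F_2(M)\to S^{d-1}$, fix a symmetric distance function $\rho$ coming from a Riemannian metric on $M$ and set $g(x,y)=\rho(x,y)\,h(x,y)$ for $x\ne y$ and $g|_{\Delta M}=0$; then $g$ is a continuous $\Z/2$-map with $g^{-1}(0)=\Delta M$. So the dimension restriction enters only through the Haefliger--Weber step.

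\emph{The main obstacle.} Essentially everything rests on the hard half of the Haefliger--Weber criterion --- that an equivariant map of deleted products can be upgraded to an honest embedding --- which I would invoke rather than reprove. In the smooth category this is due to Haefliger and rests on his embedding theory: one realizes the equivariant data by general position and then removes the resulting double‑point set by handle/disk exchanges, which are available precisely because $2d\ge 3(n+1)$; there are also Weber's more geometric argument and modern proofs via embedding calculus. The only other point requiring care is the choice of model for the deleted product --- the configuration space $F_2(M)$, a Fulton--MacPherson‑type compactification, or the blow‑up of $M\times M$ along $\Delta M$ --- but these are $\Z/2$-homotopy equivalent and carry the same equivariant obstruction theory in the metastable range, so the choice is immaterial.
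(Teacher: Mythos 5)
Your proposal is correct and matches how the paper treats this statement: the paper simply cites Haefliger's metastable-range embedding theorem, and your argument amounts to the routine translation between the two formulations (the difference map $g(x,y)=e(x)-e(y)$ for the forward direction, and normalization to an equivariant map $F_2(M)\to S^{d-1}$ followed by an appeal to the Haefliger--Weber deleted-product criterion, with $d>\tfrac32 n+\tfrac32$ correctly identified as $2d>3(n+1)$). Your added observation that the $g$-formulation and the $S^{d-1}$-formulation are equivalent without any dimension hypothesis, via $g=\rho\cdot h$, is a nice point that the paper leaves implicit.
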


We expect the following to hold for coupled embeddings.

\begin{conjecture} \label{con:haef} Let $M$ and $N$ be smooth closed manifolds of dimensions $p$ and $q$, respectively.  If $d~>~\frac32 ~(p~+~q)~+~\frac32$, then the existence of a differentiable coupled embedding $M \times N \to \R^d$ is equivalent to the existence of a $(\Z/2)^2$-equivariant map $g \colon M \times N \times M \times N \to \R^d$ such that $g^{-1}(0) = \Delta$, where $\Delta = (\Delta M \times N^2) \cup (M^2 \times \Delta N)$.
\end{conjecture}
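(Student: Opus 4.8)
The plan is to adapt Haefliger's original deleted-product argument to the product setting, running it ``one factor at a time.'' One direction is immediate: a coupled embedding $f \colon M \times N \to \R^d$ induces the $(\Z/2)^2$-equivariant map $g = \Phi_f$ from the Lemma preceding Corollary~\ref{cor:topdim}, namely $g(x_1,y_1,x_2,y_2) = f(x_1,y_1) + f(x_2,y_2) - f(x_1,y_2) - f(x_2,y_1)$, and $g^{-1}(0) = \Delta$ exactly because $f$ has no axis-aligned parallelogram (with the diagonal set $\Delta$ precisely the locus forced to map to $0$). The content is the converse: from an equivariant $g$ with $g^{-1}(0) = \Delta$, produce a coupled embedding in the metastable range.

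The key steps for the converse: First, interpret the hypothesis as the vanishing of an obstruction. The complement $C = (M \times N)^{\times 2} \setminus \Delta$ carries a free $(\Z/2)^2$-action, and an equivariant map $C \to \R^d \setminus \{0\}$, i.e.\ $C \to S^{d-1}$, exists precisely when a primary obstruction class in $H^*_{(\Z/2)^2}(C; \underline{\Z})$ vanishes; in the metastable range $d > \tfrac32(p+q) + \tfrac32$ this primary obstruction is the only one (the relevant cohomology of the free quotient is concentrated in dimensions below the connectivity threshold of $S^{d-1}$). Second, realize $C$ up to equivariant homotopy by a more tractable model: $C$ deformation retracts $(\Z/2)^2$-equivariantly onto a neighborhood assembled from tubular neighborhoods of $\Delta M \times N^2$ and $M^2 \times \Delta N$ removed; the point is that $F_2(M \times N)$ itself, which governs genuine embeddings $M \times N \hookrightarrow \R^d$, sits inside $C$, so any equivariant map off $C$ restricts to one off $F_2(M\times N)$. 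Third, invoke the product Haefliger–Weber / deleted product criterion: in the metastable range, a map $M \times N \to \R^d$ can be perturbed to an embedding iff there is a $\Z/2$-equivariant map $F_2(M \times N) \to S^{d-1}$, and separately the two ``slice'' conditions (the map is an embedding on each $\{x\} \times N$ and each $M \times \{y\}$) are governed by equivariant maps out of $M \times F_2(N)$ and $F_2(M) \times N$. Assemble these: an equivariant map out of $C$ restricts to all three of these deleted products simultaneously and compatibly, and a relative-obstruction / general-position argument then upgrades a generic smooth $f$ to one realizing all three nondegeneracies at once, i.e.\ a coupled embedding.

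The main obstacle I expect is the final assembly step: showing that the \emph{single} equivariant map out of $C$ can be used to kill \emph{all three} obstructions (the two slice-embedding obstructions and the parallelogram obstruction) by one coherent perturbation, rather than merely each in isolation. The subtlety is that $\Delta$ is not a manifold but a union of two submanifolds meeting along $\Delta M \times \Delta N$, so $C$ has a stratified boundary structure and the Haefliger inductive desingularization must be carried out compatibly across the strata; one must check that the Whitney-trick-style moves that remove excess intersections near $\Delta M \times N^2$ do not reintroduce intersections near $M^2 \times \Delta N$. I would handle this by working with the functor $f \mapsto \Phi_f$ directly: the obstruction to $\psi_f$ (in the notation of Theorem~\ref{thm:notmultijet}) avoiding the ``bad'' locus $\Sigma$ is, after the equivariant reduction, a cohomology class pulled back from $C/(\Z/2)^2$, and the hypothesis on $g$ says this class is null; a relative version of Theorem~\ref{thm:notmultijet} together with the metastable dimension count (ensuring the relevant intersection has negative expected dimension after one round of general position) then finishes it. A secondary technical point is verifying that the constant $\tfrac32(p+q) + \tfrac32$ — rather than something like $\tfrac32\max(p,q)$ plus lower-order terms — is the correct threshold; this should come out of the connectivity of $S^{d-1}$ against $\dim C = 2(p+q)$ in the standard Haefliger bookkeeping, but it warrants a careful check since the ambient ``diagonal'' $\Delta$ has two pieces of different codimensions ($2p$ and $2q$).
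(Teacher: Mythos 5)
First, a point of calibration: the statement you are proving is Conjecture~\ref{con:haef}, which the paper does \emph{not} prove. The paper only outlines a strategy (via the Gromov--Eliashberg ``removal of singularities'' technique, replacing the components $g_i$ one at a time by functions of the form $\Phi_{f_i}$ away from $\Delta$) and explicitly identifies what is missing: an understanding of how the coupled embedding condition manifests near $\Delta$, together with analogues of the Haefliger--Hirsch and Smale--Hirsch theorems for the resulting second-order ``coupled nonsingularity'' condition (Proposition~\ref{prop:nonlocalcoupled}). Your proposal takes a genuinely different route --- deleted-product obstruction theory in the style of Haefliger--Weber rather than removal of singularities --- but it does not close that same gap, and it contains concrete errors.

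The most serious error is the inclusion you assert between $C$ and $F_2(M\times N)$: it is backwards. Since $\Delta(M\times N) = \{x_1=x_2 \text{ and } y_1=y_2\}$ is contained in $\Delta = \{x_1=x_2 \text{ or } y_1=y_2\}$, one has $C \subsetneq F_2(M\times N)$, so an equivariant map defined off $\Delta$ gives no information on the larger deleted product; extending it across $\Delta \setminus \Delta(M\times N)$ is an extension problem, not a restriction. Relatedly, your ``three nondegeneracies'' framing is misdirected: a coupled embedding need not be an embedding of $M\times N$ at all (by Lemma~\ref{lem:nonpar}, nonsingular bilinear maps are coupled embeddings, and they are far from injective), so the Haefliger--Weber criterion for embedding $M\times N$ is neither implied by the hypothesis nor required by the conclusion; the only condition to realize is the vanishing of $\Phi_f$ nowhere on $C \cong F_2(M)\times F_2(N)$. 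Your obstruction-theoretic step is also off: the equivariant map $C \to S^{d-1}$ is handed to you for free as $g/|g|$, so there is no primary obstruction to compute (and in any case $d > \tfrac32(p+q)+\tfrac32$ does not put $d+1$ above $\dim C = 2(p+q)$, so higher obstructions would not vanish for dimension reasons). The real difficulty, which your ``relative obstruction / general position'' step waves at but does not engage, is exactly the one the paper isolates: near $\Delta$ the parallelogram condition degenerates to a second-order, non-coordinate-free differential condition, and no h-principle or Smale--Hirsch-type flexibility theorem is currently available to homotope $g$ to a map of the form $\Phi_f$ there. Until that local step is supplied, neither your outline nor the paper's constitutes a proof.
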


Of course, the forward direction comes from the map $\Phi_f$ for a coupled embedding $f$.

We outline a possible method of proof, based on the ``removal of singularities'' h-principle technique due to Gromov and Eliashberg \cite{GromovEliashberg}.  See \cite{Harrison5} for an exposition of this technique, and to see its usefulness in proving a Whitney-type theorem for \emph{totally nonparallel immersions}.

Suppose that there exists a $(\Z/2)^2$-equivariant map $g \colon M \times N \times M \times N \to \R^d$ such that $g^{-1}(0)~=~\Delta$.  Observe that if $g = \Phi_f$ for some differentiable map $f \colon M \times N \to \R^d$, then $f$ is a coupled embedding, and the proof is complete.  The strategy is to replace the component functions of $g = (g_1,\dots,g_d)$, one by one, by functions of the form $\Phi_{f_i}$,  for $f_i \colon M \times N \to \R$, while preserving, at each stage, the condition that the preimage of $0$ is precisely $\Delta$.

Suppose that naively we replace $g_1$ by $\Phi_{f_1}$ for some arbitrary function $f_1 \colon M \times N \to \R$.  This causes no issues on the complement of the set
\[
\Sigma = (g_2,\dots,g_d)^{-1}(0),
\]
since regardless of the value of $\Phi_{f_1}$, the function $(\Phi_{f_1}, g_2, \dots, g_d)$ cannot map points of $\Sigma^c$ to $0$.  Therefore we only must be careful about the replacement at points of $\Sigma$.  For this, we recall that the function $g_1$ is safe on $\Sigma$, and we make use of this fact during the replacement.

We first focus on making the replacement on the complement $C$ of some open $(\Z/2)^2$-equivariant neighborhood of $\Delta$. Let $\pi \colon M \times N \times M \times N \to M \times N$ be the projection to the first factor.  We make the following assumption:

\begin{assumption}
The restriction of $\pi$ to $\Sigma \cap C$ is injective.
\end{assumption}

We claim that with this assumption, we can make the desired replacement of $g_1$ by $\Phi_{f_1}$.  First, observe that by injectivity, for each $(x_1,y_1) \in \pi(\Sigma \cap C)$, there exists a unique $(x_2,y_2)$ such that $(x_1,y_1,x_2,y_2) \in \Sigma \cap C$.  Moreover, by $(\Z/2)^2$-invariance of $\Sigma \cap C$, the points $(x_2,y_2,x_1,y_1)$, $(x_1,y_2,x_2,y_1)$, and $(x_2,y_1,x_1,y_2)$ also lie in $\Sigma \cap C$.  Thus for $(x_1,y_1) \in \pi(\Sigma \cap C)$ we define $f_1(x_1,y_1) = g_1(x_1,y_1,x_2,y_2)$.  Since $\Sigma \cap C$ is closed, we then extend $f_1$ arbitrarily to $M \times N$.

Now let us check that $\Phi_{f_1}$ satisfies the desired properties.  The value of $\Phi_{f_1}$ is irrelevant on points of $\Sigma^c \cap C$.  For $(x_1,y_1,x_2,y_2) \in \Sigma \cap C$, we compute
\begin{align*}
\Phi_{f_1}(x_1,y_1,x_2,y_2) & = f_1(x_1,y_1) + f_1(x_2,y_2) - f_1(x_1,y_2) - f_1(x_2,y_1) \\
& = g_1(x_1,y_1,x_2,y_2) + g_1(x_2,y_2,x_1,y_1) - g_1(x_1,y_2,x_2,y_1) - g_1(x_2,y_1,x_1,y_2) \\
& = 4g_1(x_1,y_1,x_2,y_2).
\end{align*}
Therefore $(\Phi_{f_1},g_2,\dots,g_d)$ is nonzero on $\Sigma \cap C$, since the original function $g$ was nonzero on $\Sigma \cap C$.

Now we address the assumption above.  For a generic function $g$, the set $\Sigma = (g_2,\dots,g_d)^{-1}(0)$ has codimension $d-1$, and hence dimension $2(p+q) - d + 1$, in $M \times N \times M \times N$.  Thus a generic function $\Sigma \to M \times N$ is an embedding when $2 \dim(\Sigma) + 1 \leq \dim(M \times N)$, that is,  when $\frac32 (p+q) + \frac32 \leq d$.  Thus in the dimension range of the conjecture, we may preempt the replacement of $g_1$ with a perturbation of $g$ on $C$, to ensure that the genericity assumption is satisfied.  Since $C$ is a closed set disjoint from $\Delta$, this perturbation of $g$ does not affect the condition $g^{-1}(0) = \Delta$.  Thus in the stated range of dimensions, the replacement of $g$ on $C$ proceeds inductively; at each stage $i$ we perturb the functions $f_1,\dots,f_{i-1}$ and $g_{i+1},\dots,g_d$, if necessary, so that the genericity assumption is satisfied, and then we replace $g_i$ with $\Phi_{f_i}$ as outlined above.

The issue with the argument above is that we have only replaced $g$ on $C$.  To replace $g$ on a neighborhood of $\Delta$, it is necessary to carefully study how the coupled embeddability condition manifests locally.  In the proof of Haefliger's embedding theorem, this replacement is made as follows.  An immersion is a local embedding, and $g$ is replaced near the diagonal by first using a result of Haefliger and Hirsch to homotope the $\Z/2$-equivariant map to an appropriate bundle monomorphism (possible in the stated dimension range), and then using the Smale--Hirsch theorem to homotope the monomorphism to an immersion.  After the replacement is made near the diagonal, the above replacement is made as outlined above.

Thus to complete the proof of this conjecture, it is necessary not only to understand how the coupled embedding condition manifests locally, but also to generalize both the Haefliger--Hirsch theorem and the Smale--Hirsch theorem.  We initiate this process with a brief study of the local condition.

\subsection{Local coupled embeddability}

Recall that a smooth map between smooth manifolds is called an immersion if the differential is a bundle monomorphism.  The immersion condition is first-order, and every immersion is a local embedding.  Intuitively, when considering the embedding condition on pairs of points $x_1, x_2 \in M$ which grow closer, the zero-order, two-point local embedding condition manifests as the first-order, single-point immersion condition.   The parallelogram condition is a condition at quadruples of points.  The goal of this section is to observe how locally this spawns a second-order, single-point differential condition.

\begin{definition} Given $(x,y) \in M \times N$, we say that $f \colon M \times N \to \R^d$ satisfies the \emph{local parallelogram condition at} $(x,y)$ if, for every neighborhood $U$ of $(x,y) \in M \times N$, the restriction $f \big|_U$ is not a coupled embedding.  Otherwise we say that $f$ is a \emph{local coupled embedding}.
\end{definition}

\begin{definition} Let $f \colon M \times N \to \R^d$ be a smooth map.   We say that $f$ is \emph{coupled nonsingular} if, at each $(x,y) \in M \times N$, in some local coordinates $(x_1,\dots,x_m)$ on $M$ and $(y_1,\dots,y_n)$ on $N$, $\frac{\partial^2 f}{\partial x_i \partial y_j} \neq 0$ for all $i, j$.  That is, the second derivative is nonzero for all pairs of the form $(u,0),(0,v)$, when considered as a symmetric bilinear map on $T_{(x,y)}(M \times N) \times T_{(x,y)}(M \times N)$.
\end{definition}

\begin{proposition}
\label{prop:nonlocalcoupled}
A coupled nonsingular map $f \colon M \times N \to \R^d$ is a local coupled embedding.
\end{proposition}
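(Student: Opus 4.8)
The plan is to prove the equivalent local statement: every point $(x_0,y_0)\in M\times N$ has a neighborhood $U$ on which $f$ restricts to a coupled embedding. Fix such a point and choose coordinate charts carrying neighborhoods of $x_0$ and $y_0$ diffeomorphically onto convex open subsets of $\R^m$ and $\R^n$, with $x_0,y_0\mapsto 0$; I work in these coordinates. Set
\[
g(x_1,x_2,y_1,y_2) = f(x_1,y_1) - f(x_2,y_1) - f(x_1,y_2) + f(x_2,y_2).
\]
The first observation is that any axis-aligned parallelogram $x_1,x_2,y_1,y_2$ (necessarily with $x_1\neq x_2$ and $y_1\neq y_2$) forces $g=0$: writing $g = [f(x_1,y_1)-f(x_2,y_1)]-[f(x_1,y_2)-f(x_2,y_2)] = [f(x_1,y_1)-f(x_1,y_2)]-[f(x_2,y_1)-f(x_2,y_2)]$ and applying, respectively, the two coordinate projections $p_1,p_2$ of the defining decomposition $\R^d=\R^k\times\R^{d-k}$ (which kill the respective brackets, by the parallelogram conditions) gives $p_1g=p_2g=0$, hence $g=0$. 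So it suffices to produce a convex neighborhood $V_M\times V_N$ of $(0,0)$ on which $g$ is nonzero over $F_2(V_M)\times F_2(V_N)$.

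Next I would pass from the four-point quantity $g$ to a one-point second-order quantity using the integral form of Taylor's theorem. Applying the fundamental theorem of calculus twice, once along the segment from $y_2$ to $y_1$ and once along the segment from $x_2$ to $x_1$ (legitimate by convexity of the charts, with differentiation under the integral justified by smoothness of $f$), one obtains
\[
g(x_1,x_2,y_1,y_2) = \int_0^1\!\!\int_0^1 B_{(\xi_s,\eta_t)}\bigl(x_1-x_2,\ y_1-y_2\bigr)\,ds\,dt,
\]
where $\xi_s=(1-s)x_2+sx_1$, $\eta_t=(1-t)y_2+ty_1$, and $B_{(\xi,\eta)}(a,b)=\sum_{i,j}\frac{\partial^2 f}{\partial x_i\partial y_j}(\xi,\eta)\,a_ib_j$ is the mixed second derivative of $f$ at $(\xi,\eta)$ regarded as a bilinear map $\R^m\times\R^n\to\R^d$. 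The coupled nonsingularity hypothesis says precisely that $B_{(0,0)}(a,b)\neq 0$ whenever $a\neq 0$ and $b\neq 0$. Putting $a=x_1-x_2\neq 0$, $b=y_1-y_2\neq 0$ and pulling the scalars $|a|,|b|$ out of the bilinear integrand, it remains to show that the average $\int_0^1\!\!\int_0^1 B_{(\xi_s,\eta_t)}(\hat a,\hat b)\,ds\,dt$ is nonzero for the unit vectors $\hat a=a/|a|$, $\hat b=b/|b|$.

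This last step is a routine compactness argument. The map $\Psi\colon(\xi,\eta,\hat a,\hat b)\mapsto B_{(\xi,\eta)}(\hat a,\hat b)$ is continuous, and by the hypothesis $c:=\min\{\,|\Psi(0,0,\hat a,\hat b)| : \hat a\in S^{m-1},\ \hat b\in S^{n-1}\,\}>0$ since $S^{m-1}\times S^{n-1}$ is compact. Continuity then provides $\delta>0$ with $|\Psi(\xi,\eta,\hat a,\hat b)-\Psi(0,0,\hat a,\hat b)|<c/2$ whenever $|\xi|,|\eta|\le\delta$ and $\hat a,\hat b$ are unit. Choose convex $V_M,V_N$ whose closures lie inside these $\delta$-balls. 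Then for $x_1\neq x_2\in V_M$ and $y_1\neq y_2\in V_N$, every $\xi_s,\eta_t$ lies in the $\delta$-balls, so — with $\hat a,\hat b$ now fixed by the four chosen points — each value $B_{(\xi_s,\eta_t)}(\hat a,\hat b)$ lies in the open ball of radius $c/2$ about the nonzero vector $\Psi(0,0,\hat a,\hat b)$; that ball is convex and omits $0$, hence the average (the integral) lies in its closure, which still omits $0$ because $|\Psi(0,0,\hat a,\hat b)|\ge c>c/2$. Therefore $g\neq 0$, and $f|_{V_M\times V_N}$ is a coupled embedding.

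The only place a genuine idea is needed is the Taylor reduction of the second paragraph: it exhibits the mixed second derivative $B$ as the infinitesimal shadow of the four-point parallelogram condition, which is exactly why a second-order one-point condition governs local coupled embeddability. I would also flag the one subtlety this makes visible: as the four points vary, the difference vectors $x_1-x_2$ and $y_1-y_2$ sweep out all directions, so the argument genuinely needs $B_{(x_0,y_0)}$ to be nonsingular as a bilinear map — nonvanishing on every pair of nonzero vectors, i.e.\ on all pairs $(u,0),(0,v)$ — and not merely nonvanishing on one coordinate frame; this is the coordinate-free content of the definition of coupled nonsingular, and it is what makes the proposition true.
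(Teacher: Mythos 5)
Your proof is correct and follows the same basic strategy as the paper's: reduce the four-point alternating sum $f(x_1,y_1)+f(x_2,y_2)-f(x_1,y_2)-f(x_2,y_1)$ (which any axis-aligned parallelogram forces to vanish) to the mixed second derivative of $f$ at the base point. Where the paper expands each of the four values in a Taylor series and simply asserts that the leading term $d^2f_{(x,y)}((u_1-u_2,0),(0,v_1-v_2))$ plus ``higher order terms'' is nonzero, you replace this with the exact identity $g=\int_0^1\!\int_0^1 B_{(\xi_s,\eta_t)}(x_1-x_2,y_1-y_2)\,ds\,dt$ and a compactness/uniform-continuity argument showing the integrand stays in a convex set avoiding the origin. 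This is a genuine improvement: since both the leading term and the remainder tend to zero as the parallelogram shrinks, the nonvanishing of the sum is not immediate from the pointwise hypothesis, and your uniform lower bound $c/2\cdot|x_1-x_2|\,|y_1-y_2|$ is exactly the missing quantitative control. You are also right to insist that the hypothesis must be read as nonsingularity of the full bilinear form $B_{(x_0,y_0)}$ on all pairs of nonzero vectors (as the paper's ``that is'' clause intends), not merely nonvanishing of the coordinate entries $\partial^2 f/\partial x_i\partial y_j$; the coordinate-entry condition alone would not support the compactness step.
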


\begin{proof}
Suppose that $f$ fails to be a local coupled embedding at $(x,y)$.  Then in every neighborhood of $(x,y)$ there exist points $x_1, x_2 \in M$, $y_1, y_2 \in N$ which form an axis-aligned parallelogram.  We write the following:
\begin{align*}
f(x_1,y_1) - f(x,y) & = df_{(x,y)}(u_1,v_1) + \frac12 d^2f_{(x,y)}((u_1,v_1),(u_1,v_1)) + \cdots \\
f(x_2,y_2) - f(x,y) & = df_{(x,y)}(u_2,v_2) + \frac12 d^2f_{(x,y)}((u_2,v_2),(u_2,v_2)) + \cdots \\
f(x_1,y_2) - f(x,y) & = df_{(x,y)}(u_1,v_2) + \frac12 d^2f_{(x,y)}((u_1,v_2),(u_1,v_2)) + \cdots \\
f(x_2,y_1) - f(x,y) & = df_{(x,y)}(u_2,v_1) + \frac12 d^2f_{(x,y)}((u_2,v_1),(u_2,v_1)) + \cdots.
\end{align*}
Adding the first two equations and subtracting the latter two yields
\begin{align*}
f(x_1,y_1) + f(x_2,y_2) - f(x_1,y_2) - f(x_2,y_1) = d^2f_{(x,y)}((u_1-u_2,0),(0,v_1-v_2)) + \text{higher order terms}.
\end{align*}
The right side is nonzero by assumption, so there is no axis-aligned parallelogram.
\end{proof}

The fact that this condition is not coordinate-free lends some difficulty to a systematic study of this condition.  We expect that it would be difficult to develop an h-principle statement for the coupled nonsingularity condition, and hence to finish the outlined proof of Conjecture \ref{con:haef}.

%We show that, when the target dimension is large enough with respect to the dimensions $p$ and $q$ of $M$ and $N$, a generic map $f$ is coupled nonsingular, hence a local coupled embedding.

\section{Coupled embeddings: nonexistence}
\label{sec:nonexistence}

In this section we show how exploiting the combinatorics of a triangulation of a space can yield insights into coupled embeddability.  To begin we review some definitions.

\subsection{Joins and deleted joins}
\label{sec:joins}
Recall that for topological spaces $X$ and $Y$ the \emph{join} is the space $X * Y$ obtained from $X \times Y \times [0,1]$ by taking the quotient with respect to the equivalence relation generated by $(x,y,0) \sim (x',y,0)$ for $x,x' \in X$ and $(x,y,1) \sim (x,y',1)$ for $y,y' \in Y$. If the simplicial complexes $\Sigma_X$ and $\Sigma_Y$ triangulate $X$ and~$Y$, respectively, then the join $X * Y$ is naturally triangulated by the \emph{join} of simplicial complexes~$\Sigma_X * \Sigma_Y$. As abstract simplicial complexes 
$$\Sigma_X * \Sigma_Y = \{\sigma \times \{1\} \cup \tau \times \{2\} \ : \ \sigma \in \Sigma_X, \ \tau \in \Sigma_Y\},$$
that is, the join is defined by the rule that the vertices of a face in $\Sigma_X$ and the vertices of a face in $\Sigma_Y$ together span a face in~$\Sigma_X * \Sigma_Y$. Here we assume that $\Sigma_X$ and $\Sigma_Y$ have disjoint vertex sets. 

One may think of the join $X * Y$ as abstract convex combinations of points in $X$ and in~$Y$. We will write $\lambda_1x+\lambda_2y$, with $\lambda_1, \lambda_2 \ge 0$ and $\lambda_1 + \lambda_2 = 1$, for the point $(x,y,\lambda_1)$ in~$X * Y$. Thus for $\lambda_1 = 0$ the point $x$ does not influence the point in~$X * Y$, whereas for $\lambda_1 = 1$ the choice of~$y$ does not matter. Note that in this notation $\lambda_1x_1+\lambda_2x_2$ and $\lambda_2x_2+\lambda_1x_1$ determine different points in $X * X$ if $x_1 \ne x_2$.

The \emph{deleted join} $X^{*2}_\Delta$ of a space $X$ is obtained from $X * X$ by deleting all points of the form $(x,x,t)$ with $x \in X$ and $t \in (0,1)$. The \emph{deleted join} $\Sigma^{*2}_\Delta$ of a simplicial complex $\Sigma$ is obtained from the join $\Sigma * \Sigma$ by deleting all those faces that have a vertex in $\Sigma$ in common, that is,
$$\Sigma^{*2}_\Delta = \{\sigma \times \{1\} \cup \tau \times \{2\} \ : \ \sigma, \tau \in \Sigma, \ \sigma \cap \tau = \emptyset\}.$$
In particular the deleted join $(\Delta_{n})^{*2}_\Delta$ of the $n$-simplex $\Delta_n$ is the boundary of an $(n+1)$-dimensional crosspolytope, and thus $(\Delta_{n})^{*2}_\Delta$ is homeomorphic to~$S^n$. Interchanging the join factors $\lambda_1x_1 + \lambda_2x_2 \mapsto \lambda_2x_2 + \lambda_1x_1$ is the antipodal action on the sphere~$(\Delta_{n})^{*2}_\Delta$.

\subsection{Discrete coupled embeddability}

Let $\Sigma$ be a simplicial complex.  A map $f \colon \Sigma \to \R^d$ is called an almost-embedding if for every pair of disjoint faces $\sigma_1, \sigma_2$ and every $x_1 \in \sigma_1$, $x_2 \in \sigma_2$, $f(x_1) \neq f(x_2)$.  We may discretize coupled embeddings similarly.

\begin{definition}  Let $\Sigma$ and $\Omega$ be simplicial complexes.  We say that $f \colon \Sigma \times \Omega \to \R^d$ has the \emph{discrete parallelogram condition} if there exist disjoint faces $\sigma_1, \sigma_2$ of $\Sigma$ and disjoint faces $\omega_1, \omega_2$ of $\Omega$, and $x_1 \in \sigma_1$, $x_2 \in \sigma_2$, $y_1 \in \omega_1$, $y_2 \in \omega_2$ which form an axis-aligned parallelogram.  Otherwise $f$ is a \emph{coupled almost-embedding}.  In the case of simplicial complexes, the notation $d(\Sigma,\Omega)$ refers to the smallest dimension admitting a coupled almost-embedding from $\Sigma \times \Omega$.
\end{definition}

To generate obstructions to coupled almost-embeddings, we require a statement on $(\Z/2)^2$-equivariant maps from $S^m \times S^n$ which takes into account different possible actions on the codomain.  Let $V_{--}, V_{+-}$, or $V_{-+}$ denote $\R$ as a $(\Z/2)^2$-module, where the subscript indicates the action of the two standard generators, that is, $V_{+-}$ indicates that the first generator acts trivially, while the second generator acts by negation.   The following lemma can be deduced easily from the work of Ramos~\cite{Ramos}.   A short proof using mapping degrees can be found in~\cite{FrickEtal}.

\begin{lemma}
\label{lem:productofspheres}
	Let $i, j, k, m,$ and $n$ be nonnegative integers with $m+n  = i+j+k$, $m \ge i$, and $n \ge j$. Suppose that $m-i$ and $n-j$ do not share a one in any digit of their binary expansions. Then any $(\Z/2)^2$-equivariant map $S^m \times S^n \to V_{-+}^i \times V_{+-}^j \times V_{--}^k$ has a zero.
\end{lemma}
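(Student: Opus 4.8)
The plan is to reduce the statement to the classical obstruction for nonsingular biskew maps (Lemma~\ref{lem:binary}) by an explicit construction that trades the nontrivial action on the target for a higher-dimensional ``standard'' biskew situation. The point is that the target $V_{-+}^i \times V_{+-}^j \times V_{--}^k$ is $(\Z/2)^2$-equivariantly a retract of a genuinely biskew sphere problem on $S^{m} \times S^{n}$, and the numerical hypothesis is exactly what makes the classical Hopf-type obstruction apply.

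Concretely, I would argue as follows. Suppose for contradiction that there is a $(\Z/2)^2$-equivariant map $g \colon S^m \times S^n \to V_{-+}^i \times V_{+-}^j \times V_{--}^k$ without a zero; composing with radial projection we may assume it lands in the unit sphere of that module. Now I want to build from $g$ a nonsingular biskew map $S^{m-i} \times S^{n-j} \to \R^{m+n-i-j} = \R^{(m-i)+(n-j)}$; since $m-i$ and $n-j$ share no binary digit, Lemma~\ref{lem:binary} forbids this, giving the contradiction. To build it, embed $S^{m-i} \hookrightarrow S^m$ as the fixed-point set of the involution corresponding to the $V_{-+}$-coordinates — more precisely, write $\R^{m+1} = \R^{i} \oplus \R^{m-i+1}$ where the first generator of $(\Z/2)^2$ acts by $-1$ on $\R^{i}$ and trivially on $\R^{m-i+1}$, and let $S^{m-i}$ be the unit sphere of $\R^{m-i+1}$. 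Similarly realize $S^{n-j} \subset S^n$. On $S^{m-i} \times S^{n-j}$ the first generator acts trivially and the second acts antipodally on both factors, i.e.\ it is exactly a biskew situation for the remaining $(\Z/2)$. Restricting $g$ to $S^{m-i} \times S^{n-j}$ kills the $V_{-+}^i$ and (appropriately) $V_{+-}^j$ directions in the sense that the restricted map is equivariant for the surviving second-generator action and lands in $\R^{m+n-i-j}$ with the $\pm$-action; one checks this restricted map is the required nonsingular biskew map.

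The cleanest route is probably to cite the stated reference directly: the lemma is asserted to follow easily from Ramos~\cite{Ramos}, with a short degree-theoretic proof in~\cite{FrickEtal}. So I would write: \emph{This is a mild reformulation of results in \cite{Ramos}; for a short self-contained proof via mapping degrees, see \cite[\S?]{FrickEtal}.} If a proof sketch is wanted in-house, I would give the reduction above and then invoke Lemma~\ref{lem:binary}, noting that the degree computation underlying Lemma~\ref{lem:binary} is what propagates through: the induced map on mod-$2$ cohomology of the relevant quotient (a product of projective spaces, twisted appropriately by the module structure) forces a binomial-coefficient relation $\binom{m+n-?}{?} \equiv \text{(nonzero)} \pmod 2$, which by Kummer/Lucas fails precisely when $m-i$ and $n-j$ share no binary digit.

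\textbf{The main obstacle} I anticipate is bookkeeping the three module types $V_{-+}, V_{+-}, V_{--}$ simultaneously and making sure the restriction to $S^{m-i}\times S^{n-j}$ genuinely lands where claimed and remains nonsingular: the $V_{+-}^j$ summand behaves asymmetrically to $V_{-+}^i$ (it is the \emph{second} generator that is trivial there), so the two factors of the domain are cut down by different sub-involutions, and one must check the leftover action on $S^{m-i}\times S^{n-j}$ is the full diagonal antipodal action rather than something degenerate. A secondary subtlety is the degenerate edge cases $m=i$ or $n=j$ (then one factor becomes $S^0$ and the biskew statement degenerates), which should be handled separately or absorbed into the convention that $\binom{0}{0}=1$ makes the conclusion vacuous/trivial. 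Given that the paper explicitly points to \cite{Ramos} and \cite{FrickEtal}, I would lean on those citations for the technical core and only sketch the reduction.
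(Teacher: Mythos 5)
Your fallback---deferring to \cite{Ramos} and the short degree-theoretic proof in \cite{FrickEtal}---is exactly what the paper does (it gives no in-house proof of Lemma~\ref{lem:productofspheres}), so that route is unobjectionable. The self-contained reduction you sketch, however, does not work. The $(\Z/2)^2$-action on $S^m\times S^n$ here is the free one in which each generator acts antipodally on its own sphere factor and trivially on the other; this is forced by the intended application, where the spheres are deleted joins $(\Delta_m)^{*2}_\Delta$ and interchanging join factors is the antipodal map. In particular the first generator has empty fixed-point set in $S^m$, and on any great subsphere $S^{m-i}\subset S^m$ it still acts antipodally, not trivially. Consequently, restricting $g$ to $S^{m-i}\times S^{n-j}$ neither trivializes part of the action nor ``kills'' the $V_{-+}^i$ and $V_{+-}^j$ components: there is no reason the first $i+j$ coordinates of $g$ should vanish on a chosen subsphere, and you are left with a map of the same equivariant type on a smaller domain into the same $(m+n)$-dimensional target. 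That is a strictly weaker situation---Lemma~\ref{lem:binary} only obstructs biskew maps $S^a\times S^b\to\R^{a+b}$ and says nothing about maps from $S^{m-i}\times S^{n-j}$ into $\R^{m+n}$---so no contradiction can be extracted.

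The restriction idea can be repaired only by replacing your chosen subsphere with the zero set $Z=(g_1,g_2)^{-1}(0)$ and controlling its equivariant index, which is precisely the characteristic-class computation your last paragraph gestures at; so one may as well do that computation directly. A nonvanishing equivariant map is a nonvanishing section of the bundle $\gamma_1^{\oplus i}\oplus\gamma_2^{\oplus j}\oplus(\gamma_1\otimes\gamma_2)^{\oplus k}$ over $\R P^m\times\R P^n$, whose top Stiefel--Whitney class is $a^ib^j(a+b)^k=\binom{k}{m-i}\,a^mb^n$ since $k=(m-i)+(n-j)$; by Kummer/Lucas this class is nonzero mod $2$ exactly when $m-i$ and $n-j$ share no binary digit (your phrasing states this equivalence with the direction reversed), which yields the contradiction and also handles the edge cases $m=i$ or $n=j$ uniformly. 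Either write out this two-line Euler-class argument or simply cite \cite{FrickEtal}; the subsphere restriction as stated should not appear.
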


\begin{lemma}
\label{lem:discemb}
If there exists a coupled almost-embedding $f \colon \Delta_m \times \Delta_n \to \R^d$, then there is a $(\Z/2)^2$-equivariant map $S^m \times S^n \to V_{-+} \times V_{+-} \times V_{--}^d$ which avoids zero. 
\end{lemma}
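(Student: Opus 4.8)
The plan is to build an explicit $(\Z/2)^2$-equivariant, zero-avoiding map out of the product of deleted joins $(\Delta_m)^{*2}_\Delta \times (\Delta_n)^{*2}_\Delta$. Since $(\Delta_m)^{*2}_\Delta \cong S^m$ and $(\Delta_n)^{*2}_\Delta \cong S^n$, with the antipodal action corresponding to interchange of the join factors (cf.\ Section~\ref{sec:joins}), such a map is exactly what is required. Write a point of $(\Delta_m)^{*2}_\Delta$ as $\lambda_1 x_1 + \lambda_2 x_2$ with $\lambda_1, \lambda_2 \ge 0$, $\lambda_1 + \lambda_2 = 1$, and --- when both weights are positive --- the supports of $x_1$ and $x_2$ disjoint faces of $\Delta_m$; write a point of $(\Delta_n)^{*2}_\Delta$ similarly as $\mu_1 y_1 + \mu_2 y_2$. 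This mimics the classical construction obstructing almost-embeddings of a single complex, now at quadruples of points.

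Given a coupled almost-embedding $f \colon \Delta_m \times \Delta_n \to \R^d$, I would set
\[
\Psi_f(\lambda_1 x_1 + \lambda_2 x_2,\ \mu_1 y_1 + \mu_2 y_2) = \bigl(\lambda_1 - \lambda_2,\ \mu_1 - \mu_2,\ \Phi\bigr) \in V_{-+} \times V_{+-} \times V_{--}^d,
\]
where
\[
\Phi = \lambda_1\mu_1 f(x_1,y_1) + \lambda_2\mu_2 f(x_2,y_2) - \lambda_1\mu_2 f(x_1,y_2) - \lambda_2\mu_1 f(x_2,y_1).
\]
First I would check that $\Psi_f$ is well defined and continuous: on a face of a deleted join where some weight vanishes, say $\lambda_2 = 0$, every term of $\Phi$ carrying that weight drops out, so $\Phi$ ceases to depend on the undetermined point $x_2$, and continuity then follows from continuity of $f$ together with compactness of $\Delta_m \times \Delta_n$. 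A short sign computation then shows that $\Psi_f$ is $(\Z/2)^2$-equivariant with the stated module structure: interchanging $x_1 \leftrightarrow x_2$ and $\lambda_1 \leftrightarrow \lambda_2$ negates $\lambda_1 - \lambda_2$ and $\Phi$ and fixes $\mu_1 - \mu_2$, while interchanging $y_1 \leftrightarrow y_2$ and $\mu_1 \leftrightarrow \mu_2$ negates $\mu_1 - \mu_2$ and $\Phi$ and fixes $\lambda_1 - \lambda_2$.

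It then remains to see that $\Psi_f$ avoids zero. If $\Psi_f$ vanished somewhere, then $\lambda_1 = \lambda_2 = \tfrac12$ and $\mu_1 = \mu_2 = \tfrac12$, so $x_1, x_2$ lie in disjoint faces of $\Delta_m$ and $y_1, y_2$ in disjoint faces of $\Delta_n$, and moreover $\Phi = 0$ yields
\[
f(x_1,y_1) + f(x_2,y_2) = f(x_1,y_2) + f(x_2,y_1).
\]
As in the proof of Lemma~\ref{lem:nonpar}, this identity exhibits $x_1, x_2, y_1, y_2$ as an axis-aligned parallelogram, so $f$ satisfies the discrete parallelogram condition, contrary to hypothesis. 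Composing $\Psi_f$ with the equivariant homeomorphisms $S^m \cong (\Delta_m)^{*2}_\Delta$ and $S^n \cong (\Delta_n)^{*2}_\Delta$ produces the map claimed in the lemma.

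I expect the only real friction to be bookkeeping rather than anything conceptual. One must pin down the $(\Z/2)^2$-module type of each of the three blocks so that it matches $V_{-+} \times V_{+-} \times V_{--}^d$ on the nose, and --- the point I would be most careful about --- one must verify that the weighted expression $\Phi$ genuinely extends continuously across every lower-dimensional face of the two deleted joins; this continuity requirement is precisely what forces the weights $\lambda_i, \mu_j$ into the formula rather than the bare four-term alternating sum. A secondary point is the passage from the displayed linear identity to an honest axis-aligned parallelogram, which, as in Lemma~\ref{lem:nonpar}, is handled by choosing a decomposition $\R^d = \R^{d'} \times \R^{d''}$ whose first factor contains $f(x_1,y_1) - f(x_1,y_2)$ and whose second factor contains $f(x_1,y_1) - f(x_2,y_1)$.
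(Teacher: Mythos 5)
Your proposal is correct and is essentially identical to the paper's proof: the same map $(\lambda_1-\lambda_2,\ \mu_1-\mu_2,\ \lambda_1\mu_1 f(x_1,y_1)+\lambda_2\mu_2 f(x_2,y_2)-\lambda_1\mu_2 f(x_1,y_2)-\lambda_2\mu_1 f(x_2,y_1))$ on $(\Delta_m)^{*2}_\Delta \times (\Delta_n)^{*2}_\Delta \cong S^m \times S^n$, with the same equivariance check and the same zero-avoidance argument via the parallelogram identity. The extra details you flag (continuity across lower faces, the module types, and the passage from the linear identity to an axis-aligned parallelogram via Lemma~\ref{lem:nonpar}) are all correct and merely left implicit in the paper.
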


\begin{proof}
Given $f\colon \Delta_m \times \Delta_n \to \R^d$, define
\begin{align*}
\Phi_f & \colon (\Delta_m)^{*2}_\Delta \times (\Delta_n)^{*2}_\Delta \to V_{-+} \times V_{+-} \times V_{--}^d \\
     &\colon (\lambda_1x_1+\lambda_2x_2, \mu_1y_1+\mu_2y_2) \mapsto \\ & \ \ \ (\lambda_1-\lambda_2, \mu_1 - \mu_2, \lambda_1\mu_1 f(x_1,y_1) + \lambda_2\mu_2 f(x_2,y_2) - \lambda_1\mu_2 f(x_1,y_2) - \lambda_2\mu_1 f(x_2,y_1) ).
\end{align*}
Now recall from Section \ref{sec:joins} that the deleted join $(\Delta_{n})^{*2}_\Delta$ of the $n$-simplex $\Delta_n$ is the boundary of an $(n+1)$-dimensional crosspolytope, and thus $(\Delta_{n})^{*2}_\Delta$ is homeomorphic to~$S^n$.  Moreover, interchanging the join factors $\lambda_1x_1 + \lambda_2x_2 \mapsto \lambda_2x_2 + \lambda_1x_1$ is the antipodal action on the sphere~$(\Delta_{n})^{*2}_\Delta$.  Therefore $\Phi_f$ may be considered as a $(\Z/2)^2$-equivariant map on $S^m \times S^n$, which avoids zero due to the absence of axis-aligned parallelograms from vertex-disjoint faces. 
\end{proof}

The following corollary now results from combining Lemmas \ref{lem:productofspheres} and \ref{lem:discemb}. 

\begin{corollary}
Suppose that $m-1$ and $n-1$ do not share a one in any digit of their binary expansions.  Then there is no coupled almost-embedding $\Delta_m \times \Delta_n \to \R^{m+n-2}$.  That is, every map $f \colon \Delta_m \times \Delta_n \to \R^{m+n-2}$ admits an axis-aligned parallelogram.
\end{corollary}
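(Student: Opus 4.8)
The plan is to argue by contradiction, feeding a hypothetical coupled almost-embedding into Lemma~\ref{lem:discemb} to produce a zero-avoiding $(\Z/2)^2$-equivariant map, and then to contradict its existence by applying Lemma~\ref{lem:productofspheres} with a carefully chosen triple of parameters. So first I would suppose that $f \colon \Delta_m \times \Delta_n \to \R^{m+n-2}$ is a coupled almost-embedding. By Lemma~\ref{lem:discemb} this yields a $(\Z/2)^2$-equivariant map
\[
\Phi_f \colon S^m \times S^n \longrightarrow V_{-+} \times V_{+-} \times V_{--}^{m+n-2}
\]
that avoids $0$.

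Next I would match the target of $\Phi_f$ against the hypothesis of Lemma~\ref{lem:productofspheres} by setting $i = j = 1$ and $k = m+n-2$. Then $i+j+k = m+n$, which is exactly $\dim S^m + \dim S^n$ as the lemma requires; moreover $m \ge 1 = i$ and $n \ge 1 = j$ (the statement is trivial when $m$ or $n$ is $0$), and $m - i = m-1$ while $n - j = n-1$. The standing assumption that $m-1$ and $n-1$ share no one in any digit of their binary expansions is therefore precisely the number-theoretic hypothesis of Lemma~\ref{lem:productofspheres}. That lemma then asserts that \emph{every} $(\Z/2)^2$-equivariant map $S^m \times S^n \to V_{-+} \times V_{+-} \times V_{--}^{m+n-2}$ hits $0$, contradicting the existence of $\Phi_f$. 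This proves no coupled almost-embedding $\Delta_m \times \Delta_n \to \R^{m+n-2}$ exists, and the final sentence of the statement is just the contrapositive phrasing.

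The only thing to watch is the bookkeeping of the $(\Z/2)^2$-module structure: one must check that the first coordinate of $\Phi_f$ (the term $\lambda_1 - \lambda_2$) carries the action $V_{-+}$, the second coordinate (the term $\mu_1 - \mu_2$) carries $V_{+-}$, and the remaining $m+n-2$ coordinates (the alternating combination of values of $f$) carry the diagonal negation action $V_{--}$ — which is exactly how Lemma~\ref{lem:discemb} is stated, and exactly the pattern $V_{-+}^{i} \times V_{+-}^{j} \times V_{--}^{k}$ needed to invoke Lemma~\ref{lem:productofspheres}. There is no substantive obstacle here; the result is a direct corollary, and the ``hard part'' amounts to no more than confirming the arithmetic identity $i+j+k = m+n$ together with the identification of $m-i$ and $n-j$ with $m-1$ and $n-1$.
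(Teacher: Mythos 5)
Your proposal is correct and is exactly the paper's argument: the corollary is stated there as an immediate consequence of combining Lemma~\ref{lem:discemb} with Lemma~\ref{lem:productofspheres}, using the parameters $i=j=1$, $k=m+n-2$ so that $m-i=m-1$ and $n-j=n-1$. The module bookkeeping you flag is the only thing to check, and it matches the statements of both lemmas as written.
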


\subsection{The coindex of embedding spaces}

Recall that the coindex of a free $\Z/2$-space $Z$ is the dimension of the largest-dimensional sphere which maps equivariantly into $Z$.  In \cite{FrickHarrison}, the authors studied the coindices of embedding spaces $\Emb(X,\R^d)$ (and of almost-embedding spaces $\AEmb(\Sigma,\R^d)$) with the compact-open topology and with the $\Z/2$-action given by $f \mapsto -f$.

To explain concretely, $\coind(\Emb(X,\R^d)) = q$ if the following two statements hold:
\begin{compactitem}
\item There exists a map $f \colon X \times S^q \to \R^d$ which is $\Z/2$-equivariant in the second factor, and for every $y \in S^q$, $f(-,y)$ is an embedding.
\item For every map $f \colon X \times S^{q+1} \to \R^d$ which is $\Z/2$-equivariant in the second factor, there exists $y \in S^{q+1}$ such that $f(-,y)$ is not an embedding.
\end{compactitem}

The study of the coindex of embedding spaces is also related to the studies of bilinear and biskew maps, and is related to coupled embeddings as follows:

\begin{proposition}
\label{prop:coupledcoindex}
If there exists a coupled embedding $X \times S^q \to \R^d$, then $\coind(\Emb(X,\R^d)) \geq q$.  
\end{proposition}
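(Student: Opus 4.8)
The plan is to convert the coupled embedding $f \colon X \times S^q \to \R^d$ directly into a map witnessing $\coind(\Emb(X,\R^d)) \ge q$, i.e.\ a map $g \colon X \times S^q \to \R^d$ that is $\Z/2$-equivariant in the $S^q$-factor and restricts to an embedding of $X$ for every fixed $y \in S^q$. The candidate I would use is the ``antipodal difference''
\[
g(x,y) \;=\; f(x,y) - f(x,-y).
\]
This $g$ is continuous, and it is $\Z/2$-equivariant in the second variable because $g(x,-y) = f(x,-y) - f(x,y) = -g(x,y)$.

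The remaining point is to check that $g(-,y)$ is an embedding of $X$ for each fixed $y \in S^q$. Since $y \ne -y$ on $S^q$, this is precisely the content of the Remark following the lemma on $\Phi_f$: for a coupled embedding $f$ and fixed distinct $y_1, y_2$, the map $x \mapsto f(x,y_1) - f(x,y_2)$ is an embedding of $X$; one applies this with $(y_1,y_2) = (y,-y)$. Alternatively, I would argue directly: if $g(x,y) = g(x',y)$ with $x \ne x'$, then
\[
\Phi_f(x,x',y,-y) = f(x,y) + f(x',-y) - f(x,-y) - f(x',y) = g(x,y) - g(x',y) = 0,
\]
which contradicts the fact that $\Phi_f$ avoids $0$ on $F_2(X) \times F_2(S^q)$ whenever $f$ is a coupled embedding (that lemma). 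Hence $g(-,y)$ is injective, and therefore an embedding of $X$ in the same sense as in the Remark. With this, $g$ satisfies both defining bullet points for $\coind(\Emb(X,\R^d)) \ge q$, and the proof is complete.

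I do not expect a genuine obstacle here: the construction of $g$ is the entire content, and everything else reduces to the earlier lemma on $\Phi_f$ (or the Remark derived from it). The only subtlety worth flagging is that the excerpt's definition of $\coind(\Emb(X,\R^d))$ is phrased directly in terms of a map $X \times S^q \to \R^d$, so no compact-open topology considerations or point-set hypotheses on $X$ are needed. The same ``antipodal difference'' trick is what will later drive the promised generalization to almost-embedding spaces $\AEmb(\Sigma,\R^d)$ of simplicial complexes.
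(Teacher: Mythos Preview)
Your proof is correct and essentially identical to the paper's own argument: the paper also defines $g(x,y) = f(x,y) - f(x,-y)$, observes the $\Z/2$-equivariance in the second factor, and notes that the coupled embedding condition forbids $f(x_1,y) - f(x_1,-y) = f(x_2,y) - f(x_2,-y)$, so that $g(-,y)$ is an embedding of $X$.
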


\begin{proof}
Suppose there exists a coupled embedding $f \colon X \times S^q \to \R^d$.  Then in particular there are no points $x_1,x_2,y$ satisfying $f(x_1,y) - f(x_1,-y) = f(x_2,y) - f(x_2,-y)$, and so the map
\[
X \times S^q \to \R^d \colon (x,y) \mapsto f(x,y) - f(x,-y)
\]
is $\Z/2$-equivariant in the second factor and an embedding of $X$ for every fixed $y$.  Therefore $\coind(\Emb(X,R^d)) \geq q$.
\end{proof}

The same statement holds for coupled almost-embeddings.  Thus the results of \cite{FrickHarrison} on coindices of embedding spaces can be used to generate coupled nonembeddability results when one of the factors is a sphere.  Instead of applying these results directly, we generalize the main results of \cite{FrickHarrison} to obtain results for coupled nonembeddability when neither factor is a sphere. 

We first require a definition and related lemma.  For a simplicial complex $\Sigma$ on ground set $[n]$, we denote by $\KG(\Sigma)$ the \emph{Kneser graph of its nonfaces}, that is, the graph whose vertices correspond to those subsets of $[n]$ that do not form a face of~$\Sigma$, and with edges between vertices corresponding to disjoint faces. For a graph $G$ we denote by $\chi(G)$ its \emph{chromatic number}, that is, the least number of colors $c$ needed to color its vertices such that the two endpoints of every edge receive distinct colors.  A proper coloring of the Kneser graph of minimal nonfaces induces a proper coloring of the Kneser graph of all nonfaces (by coloring some nonface $\sigma$ with the color of some minimal nonface $\tau \subset \sigma$).

The following was shown in \cite{FrickHarrison}, but the proof is short and we reproduce it here.

\begin{lemma}
\label{lem:color}
Let $\Sigma$ be a simplicial complex on ground set~$[n]$, and let $c = \chi(\KG(\Sigma))$. Then there is a map $\Psi\colon (\Delta_{n-1})^{*2}_\Delta \to \R^{c+1}$ with $\Psi(\lambda_1x_1 + \lambda_2x_2) = -\Psi(\lambda_2x_2 + \lambda_1x_1)$ such that $\Psi(\lambda_1x_1 + \lambda_2x_2) = 0$ implies $\lambda_1 = \frac12 = \lambda_2$ and $x_1, x_2 \in \Sigma$.
\end{lemma}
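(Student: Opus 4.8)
The plan is to build the map $\Psi$ explicitly from a proper coloring of $\KG(\Sigma)$, thinking of the deleted join $(\Delta_{n-1})^{*2}_\Delta$ as the boundary of the $n$-dimensional crosspolytope sitting inside $\R^n \times \R^n$ (one copy of $\R^n$ for each join factor), with a point written as $\lambda_1 x_1 + \lambda_2 x_2$ where $x_1, x_2$ are points in vertex-disjoint faces of $\Delta_{n-1}$. Fix a proper coloring $\gamma$ of $\KG(\Sigma)$ using colors $\{1,\dots,c\}$, and extend it (as noted in the paragraph preceding the lemma) to a coloring of all nonfaces of $\Sigma$. For a point $x = \sum_{i} t_i e_i$ in a face $\sigma$ of $\Delta_{n-1}$, I would split the support $\sigma$ according to whether the partial barycenter of a subset is a face of $\Sigma$ or not; the cleanest device is: for $x_1 \in \sigma_1$, $x_2 \in \sigma_2$ with $\sigma_1 \cap \sigma_2 = \emptyset$, define the "obstruction charge'' of color $r$ at $x_1$ to be the total weight that $x_1$ places on vertices belonging to minimal nonfaces colored $r$, and likewise for $x_2$; this requires a bit of care since a vertex can lie in several minimal nonfaces, so instead I would follow the standard trick of summing over faces of $\sigma_1$ that are nonfaces of $\Sigma$, weighted so the construction is affine in the barycentric coordinates.

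Concretely, the map will have the shape
\[
\Psi(\lambda_1 x_1 + \lambda_2 x_2) = \Big( \lambda_1 - \lambda_2, \; \lambda_1 a_1(x_1) - \lambda_2 a_1(x_2), \; \dots, \; \lambda_1 a_c(x_1) - \lambda_2 a_c(x_2) \Big) \in \R^{c+1},
\]
where each $a_r \colon \Delta_{n-1} \to \R_{\ge 0}$ is a piecewise-linear function that vanishes exactly on the subcomplex of $\Sigma$ and is strictly positive on faces of $\Delta_{n-1}$ that are not in $\Sigma$ whose "controlling'' minimal nonface receives color $r$. The antipodal symmetry $\Psi(\lambda_1 x_1 + \lambda_2 x_2) = -\Psi(\lambda_2 x_2 + \lambda_1 x_1)$ is then automatic from the displayed formula since swapping join factors swaps $(\lambda_1, x_1) \leftrightarrow (\lambda_2, x_2)$. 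The main work is to check the zero set: if $\Psi(\lambda_1 x_1 + \lambda_2 x_2) = 0$ then the first coordinate forces $\lambda_1 = \lambda_2 = \tfrac12$, and then for every color $r$ we get $a_r(x_1) = a_r(x_2)$. I want to conclude $x_1, x_2 \in \Sigma$. Suppose not, say $x_1 \notin \Sigma$; then its support contains some minimal nonface $\tau$, colored by some $r = \gamma(\tau)$, so $a_r(x_1) > 0$, hence $a_r(x_2) > 0$, so the support of $x_2$ contains a minimal nonface $\tau'$ also colored $r$. But then $\tau$ and $\tau'$ are both nonfaces with the same color, and since $\sigma_1 \cap \sigma_2 = \emptyset$ we have $\tau \cap \tau' = \emptyset$, so $\tau$ and $\tau'$ are adjacent in $\KG(\Sigma)$ and receive the same color — contradicting properness of $\gamma$. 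Hence $x_1, x_2 \in \Sigma$, as required.

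The step I expect to be the genuine obstacle is defining the functions $a_r$ so that they are simultaneously (i) piecewise linear / affine in barycentric coordinates so that the whole map is well-defined and continuous on the join (in particular compatible across faces and at the "$\lambda_i = 0$'' identifications of the join), (ii) supported exactly off $\Sigma$ with the correct color bookkeeping, and (iii) such that "$x \notin \Sigma$'' really does force $a_{\gamma(\tau)}(x) > 0$ for some minimal nonface $\tau \subseteq \operatorname{supp}(x)$. A robust way to arrange all three is to set $a_r(x) = \sum_{\tau} \big(\prod_{i \in \tau} t_i\big)$ where the sum is over minimal nonfaces $\tau$ of $\Sigma$ with $\gamma(\tau) = r$ and $t_i$ are the barycentric coordinates of $x$ — this is a nonnegative polynomial that is positive precisely when $\operatorname{supp}(x)$ contains such a $\tau$, vanishes on all faces of $\Sigma$, and behaves correctly under the join identifications since any monomial involving a vertex $i$ with $t_i = 0$ drops out. (Polynomials rather than strictly linear functions are fine; continuity and the symmetry are all we need, and one can replace them by PL functions afterwards if desired.) Once the $a_r$ are pinned down, everything else is the short verification above, matching the proof in \cite{FrickHarrison}.
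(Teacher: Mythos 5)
Your proof is correct and follows essentially the same route as the paper: the same Sarkaria-style coloring of the (minimal) nonfaces, the same form $\Psi(\lambda_1x_1+\lambda_2x_2)=(\lambda_1-\lambda_2,\ \lambda_1 a_r(x_1)-\lambda_2 a_r(x_2))_r$, and the same final argument that two disjoint nonfaces of the same color would violate properness. The only (immaterial) difference is that you take $a_r$ to be the monomial sum over minimal nonfaces of color $r$, whereas the paper uses $a_r(x)=\mathrm{dist}(x,\Sigma_r)$ for the subcomplex $\Sigma_r$ with those missing faces; both vanish exactly on the same subcomplex.
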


\begin{proof}
	Color the missing faces of $\Sigma$ by $\{1, 2, \dots, c\}$ in such a way that disjoint missing faces receive distinct colors. Let $\Sigma_j$ be the simplicial complex on ground set $[n]$ whose missing faces are the missing faces of $\Sigma$ colored~$j$. Then $\Sigma = \Sigma_1 \cap \dots \cap \Sigma_c$.
	
	Define the map $\Psi\colon (\Delta_{n-1})^{*2}_\Delta \to \R^{c+1}$ by $$\Psi(\lambda_1x_1+\lambda_2x_2) = (\lambda_1-\lambda_2, \lambda_1\mathrm{dist}(x_1,\Sigma_1)-\lambda_2\mathrm{dist}(x_2, \Sigma_1), \dots, \lambda_1\mathrm{dist}(x_1,\Sigma_c)-\lambda_2\mathrm{dist}(x_2, \Sigma_c)).$$
	
	Observe that $\Psi(\lambda_1x_1+\lambda_2x_2) = 0$ implies $\lambda_1 = \lambda_2 = \frac12$ and thus $\mathrm{dist}(x_1,\Sigma_j) = \mathrm{dist}(x_2,\Sigma_j)$ for all $j \in [c]$. Since by definition of $(\Delta_{n-1})^{*2}_\Delta$ the points $x_1$ and $x_2$ are in disjoint faces of~$\Delta_{n-1}$ and the missing faces of $\Sigma_j$ intersect pairwise, for every $j$ either $x_1 \in \Sigma_j$ or $x_2 \in \Sigma_j$. This implies $\mathrm{dist}(x_1,\Sigma_j) = \mathrm{dist}(x_2,\Sigma_j) =0$ and thus $x_1, x_2 \in \Sigma$.
\end{proof}

\begin{theorem}
\label{thm:alm}
For $i=1,2$, let $\Sigma_i$ be a simplicial complex on ground set $[n_i]$ and let $c_i = \chi(\KG(\Sigma_i))$.  Suppose that the nonnegative integers $n_1 - c_1 - 2$ and $n_2 - c_2 - 2$ do not share a one in any digit of their binary expansions.   Then there is no coupled almost-embedding $\Sigma_1 \times \Sigma_2 \to \R^{n_1+n_2-c_1-c_2-4}$.
\end{theorem}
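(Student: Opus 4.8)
The plan is to follow the proof of Lemma~\ref{lem:discemb} almost verbatim, replacing the single ``sign'' coordinates $\lambda_1-\lambda_2$ and $\mu_1-\mu_2$ with the full coloring maps of Lemma~\ref{lem:color}; this is exactly the device that upgrades the argument from full simplices to arbitrary complexes at the cost of $c_1+1$ and $c_2+1$ extra codomain dimensions. Assume for contradiction that $f\colon \Sigma_1\times\Sigma_2\to\R^d$ is a coupled almost-embedding with $d = n_1+n_2-c_1-c_2-4$. Since $|\Sigma_1|\times|\Sigma_2|$ is closed in the compact metric space $|\Delta_{n_1-1}|\times|\Delta_{n_2-1}|$, extend $f$ coordinatewise by the Tietze extension theorem to a continuous map $\bar f\colon \Delta_{n_1-1}\times\Delta_{n_2-1}\to\R^d$; the particular extension will turn out to be irrelevant. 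Let $\Psi_1\colon (\Delta_{n_1-1})^{*2}_\Delta\to\R^{c_1+1}$ and $\Psi_2\colon (\Delta_{n_2-1})^{*2}_\Delta\to\R^{c_2+1}$ be the maps produced by Lemma~\ref{lem:color}, whose first coordinates are $\lambda_1-\lambda_2$ and $\mu_1-\mu_2$ respectively.

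Next I would define
\[
\Phi_f\colon (\Delta_{n_1-1})^{*2}_\Delta \times (\Delta_{n_2-1})^{*2}_\Delta \longrightarrow V_{-+}^{c_1+1}\times V_{+-}^{c_2+1}\times V_{--}^{d}
\]
by sending $(\lambda_1x_1+\lambda_2x_2,\,\mu_1y_1+\mu_2y_2)$ to
\[
\Bigl(\Psi_1(\lambda_1x_1+\lambda_2x_2),\ \Psi_2(\mu_1y_1+\mu_2y_2),\ \lambda_1\mu_1\bar f(x_1,y_1)+\lambda_2\mu_2\bar f(x_2,y_2)-\lambda_1\mu_2\bar f(x_1,y_2)-\lambda_2\mu_1\bar f(x_2,y_1)\Bigr).
\]
The first task is to verify $(\Z/2)^2$-equivariance for the stated module structure, where the two generators interchange the join factors in the first and second deleted join and act antipodally on $(\Delta_{n_i-1})^{*2}_\Delta\cong S^{n_i-1}$: the first block is odd under the first generator and fixed by the second by Lemma~\ref{lem:color}, the second block behaves dually, and the last block lies in $V_{--}^d$ by the same sign computation as in the proof of Lemma~\ref{lem:discemb}.

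The key step is that $\Phi_f$ avoids $0$. If $\Phi_f$ vanishes at $(\lambda_1x_1+\lambda_2x_2,\,\mu_1y_1+\mu_2y_2)$, then $\Psi_1=0$ forces $\lambda_1=\lambda_2=\tfrac12$ and $x_1,x_2\in\Sigma_1$ by Lemma~\ref{lem:color}, and likewise $\mu_1=\mu_2=\tfrac12$ and $y_1,y_2\in\Sigma_2$; in particular $\bar f$ agrees with $f$ at all four relevant points, so the last block reduces to $\tfrac14\bigl(f(x_1,y_1)+f(x_2,y_2)-f(x_1,y_2)-f(x_2,y_1)\bigr)=0$. Since the point lies in the deleted join, the carriers of $x_1$ and $x_2$ in $\Delta_{n_1-1}$ are disjoint; as both points lie in $|\Sigma_1|$ these carriers are disjoint faces of $\Sigma_1$, and symmetrically $y_1,y_2$ lie in disjoint faces of $\Sigma_2$. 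Thus $x_1,x_2,y_1,y_2$ witness the discrete parallelogram condition for $f$ from vertex-disjoint faces, exactly as in the proof of Lemma~\ref{lem:discemb}, contradicting that $f$ is a coupled almost-embedding.

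It remains to run the dimension count and invoke Lemma~\ref{lem:productofspheres}. Identifying each deleted join with $S^{n_i-1}$ under the antipodal action, $\Phi_f$ is a zero-avoiding $(\Z/2)^2$-equivariant map $S^{n_1-1}\times S^{n_2-1}\to V_{-+}^{c_1+1}\times V_{+-}^{c_2+1}\times V_{--}^{d}$. Put $m=n_1-1$, $n=n_2-1$, $i=c_1+1$, $j=c_2+1$, $k=d$: then $m+n=i+j+k=n_1+n_2-2$, the inequalities $m\ge i$ and $n\ge j$ are precisely the standing hypothesis that $n_1-c_1-2$ and $n_2-c_2-2$ are nonnegative, and $m-i=n_1-c_1-2$, $n-j=n_2-c_2-2$ share no one in their binary expansions by hypothesis. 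Lemma~\ref{lem:productofspheres} then forces $\Phi_f$ to have a zero, the desired contradiction. I do not expect a genuine obstacle here; the only points that demand care are the equivariance bookkeeping for the mixed $(\Z/2)^2$-module $V_{-+}^{c_1+1}\times V_{+-}^{c_2+1}\times V_{--}^d$ and checking that Lemma~\ref{lem:color} really does confine the vanishing locus of $\Phi_f$ to $\Sigma_1\times\Sigma_2$, so that the arbitrary Tietze extension $\bar f$ plays no role in the end.
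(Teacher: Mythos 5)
Your proposal is correct and is essentially identical to the paper's proof: extend $f$ to the full product of simplices, combine the coloring maps $\Psi_1,\Psi_2$ of Lemma~\ref{lem:color} with the parallelogram expression to get a $(\Z/2)^2$-equivariant map on $S^{n_1-1}\times S^{n_2-1}$, and apply Lemma~\ref{lem:productofspheres} with the same dimension bookkeeping. The extra care you take with the Tietze extension, the equivariance check, and the disjoint carriers only fills in details the paper leaves implicit.
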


\begin{proof}
Let $f \colon \Sigma_1 \times \Sigma_2 \to \R^d$, for $d = n_1+n_2-c_1-c_2-4$.  Extend $f$ continuously to $\Delta_{n_1-1} \times \Delta_{n_2-1}$.  Define
\begin{align*}
\Phi & \colon (\Delta_{n_1-1})_\Delta^{*2} \times (\Delta_{n_2-1})_\Delta^{*2} \to V_{-+}^{c_1+1} \times V_{+-}^{c_2+1} \oplus V_{--}^{d} \\
& \colon (\lambda_1 x_1 + \lambda_2 x_2, \mu_1 y_1 + \mu_2 y_2) \mapsto \\
& (\Psi_1(\lambda_1 x_1 + \lambda_2 x_2), \Psi_2(\mu_1 y_1 + \mu_2 y_2), \lambda_1\mu_1 f(x_1,y_1) + \lambda_2\mu_2 f(x_2,y_2) - \lambda_1\mu_2 f(x_1,y_2) - \lambda_2\mu_1 f(x_2,y_1) ),
\end{align*}
where $\Psi_1$ and $\Psi_2$ are the maps from Lemma \ref{lem:color}.  
Now by Lemma \ref{lem:productofspheres},  $\Phi$ must hit zero.  Therefore by Lemma \ref{lem:color}, $\lambda_1 = \lambda_2 = \mu_1 = \mu_2 = \frac12$ and $x_1, x_2 \in \Sigma_1$ and $y_1, y_2 \in \Sigma_2$.  The last $d$ components of $\Phi$ then imply that $f$ is not a coupled almost-embedding.
\end{proof}

As a corollary we obtain a rewording of Theorem 1.3 from \cite{FrickHarrison}, stated for coupled embeddings instead of for coindex.

\begin{corollary}
\label{cor:alm}
Let $\Sigma$ be a simplicial complex on ground set $[n]$, and let $p = n - \chi(\KG(\Sigma)) - 2$.   Suppose that the nonnegative integers $m$ and $p$ do not share a one in any digit of their binary expansions.  Then there is no coupled almost-embedding $\Sigma \times S^m \to \R^{m+p}$.  In particular, $d(\Sigma,S^m) > m+p$.
\end{corollary}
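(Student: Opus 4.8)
The plan is to rerun the argument behind Theorem~\ref{thm:alm}, but with the second simplicial complex replaced by the sphere $S^m$ itself, carrying its antipodal $\Z/2$-action directly rather than passing to a deleted join on that factor. The reason to do this ``by hand'' instead of quoting Theorem~\ref{thm:alm} is that feeding a triangulation of $S^m$ (say the boundary $(\Delta_m)^{*2}_\Delta$ of the $(m+1)$-dimensional crosspolytope, for which $\chi(\KG(\cdot)) = m+1$) into that theorem loses one dimension; using $S^m$ itself in the auxiliary role recovers the sharp value $m+p$, and the ``$V_{+-}$'' coordinates of the proof of Theorem~\ref{thm:alm} become a block of size $0$. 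Write $c = \chi(\KG(\Sigma))$, so $p = n-c-2 \ge 0$. Triangulate $S^m$ so that the antipodal map is free and simplicial (e.g.\ as $(\Delta_m)^{*2}_\Delta$), so that $y$ and $-y$ always lie in disjoint faces. Assume for contradiction that $f \colon \Sigma \times S^m \to \R^{m+p}$ is a coupled almost-embedding, and extend it continuously to $\widetilde f \colon \Delta_{n-1} \times S^m \to \R^{m+p}$ (such an extension exists since $\R^{m+p}$ is an absolute retract and $\Sigma$ is a subcomplex of the simplex $\Delta_{n-1}$ on $[n]$).

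Let $\Psi \colon (\Delta_{n-1})^{*2}_\Delta \to \R^{c+1}$ be the odd map from Lemma~\ref{lem:color}, whose zero set is contained in $\{\lambda_1 = \lambda_2 = \tfrac12,\ x_1, x_2 \in \Sigma\}$, and recall from Section~\ref{sec:joins} that $(\Delta_{n-1})^{*2}_\Delta$ is homeomorphic to $S^{n-1}$ with the factor-swap acting antipodally. Define
\[
\Phi \colon (\Delta_{n-1})^{*2}_\Delta \times S^m \longrightarrow V_{-+}^{c+1} \oplus V_{--}^{m+p}
\]
by
\[
\Phi\bigl(\lambda_1 x_1 + \lambda_2 x_2,\, y\bigr) = \Bigl( \Psi(\lambda_1 x_1 + \lambda_2 x_2),\ \lambda_1 \widetilde f(x_1, y) - \lambda_1 \widetilde f(x_1, -y) - \lambda_2 \widetilde f(x_2, y) + \lambda_2 \widetilde f(x_2, -y) \Bigr).
\]
One checks that $\Phi$ is $(\Z/2)^2$-equivariant: the factor-swap on $(\Delta_{n-1})^{*2}_\Delta$ negates both components, while $y \mapsto -y$ fixes the first component and negates the second. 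Now invoke Lemma~\ref{lem:productofspheres} with first sphere $S^{n-1}$, second sphere $S^m$, and blocks $V_{-+}^{c+1}$, $V_{+-}^{0}$, $V_{--}^{m+p}$: the constraint $(n-1)+m = (c+1)+0+(m+p)$ holds precisely because $p = n-c-2$, one has $n-1 \ge c+1$ since $p \ge 0$, and the number-theoretic hypothesis of the lemma applied to $(n-1)-(c+1) = p$ and $m-0 = m$ is exactly the hypothesis of the corollary. Hence $\Phi$ has a zero.

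Finally, unwind such a zero. Vanishing of the first component forces, by Lemma~\ref{lem:color}, $\lambda_1 = \lambda_2 = \tfrac12$ and $x_1, x_2 \in \Sigma$ in disjoint faces of $\Delta_{n-1}$, hence in disjoint faces $\sigma_1 \ni x_1$, $\sigma_2 \ni x_2$ of $\Sigma$; on $\Sigma \times S^m$ we have $\widetilde f = f$, so vanishing of the second component reads $f(x_1, y) + f(x_2, -y) = f(x_1, -y) + f(x_2, y)$. This ``sum condition'' exhibits an axis-aligned parallelogram on the vertex-disjoint faces $\sigma_1, \sigma_2$ of $\Sigma$ together with $y_1 = y$, $y_2 = -y$, which lie in disjoint faces of $S^m$ by our choice of triangulation --- contradicting that $f$ is a coupled almost-embedding. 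Therefore no such $f$ exists, and $d(\Sigma, S^m) > m+p$. (Alternatively, the statement follows by combining Proposition~\ref{prop:coupledcoindex}, in its almost-embedding form, with Theorem~1.3 of \cite{FrickHarrison}, which gives $\coind(\AEmb(\Sigma, \R^{m+p})) < m$ under the same hypothesis; this is the sense in which the corollary reformulates that result.)

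I expect the delicate points to be bookkeeping rather than conceptual: pinning down the $(\Z/2)^2$-module structure (which coordinates are $V_{-+}$ versus $V_{--}$, and confirming the $V_{+-}$-block has size $0$) so that Lemma~\ref{lem:productofspheres} applies with the stated parameters, and --- as is implicit elsewhere in the paper --- passing freely between the sum condition and the existence of a decomposition witnessing an axis-aligned parallelogram. The extension of $f$ and the analysis of the zero set of $\Psi$ are routine and already appear in the proof of Theorem~\ref{thm:alm}.
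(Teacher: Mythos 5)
Your proof is correct, and it supplies an argument the paper omits. You are also right about the key subtlety: Corollary \ref{cor:alm} is not a formal consequence of Theorem \ref{thm:alm}, since feeding any triangulation of $S^m$ (e.g.\ $\partial\Delta_{m+1}$, or the crosspolytope boundary $(\Delta_m)^{*2}_\Delta$) into that theorem gives $n_2-c_2-2=m-1$ and hence only obstructs dimension $m+p-1$, one short of the claim, with the binary condition on the wrong pair. Your fix --- running the same $\Phi$-construction but keeping the sphere factor as an honest $\Z/2$-sphere, so the $V_{+-}$ block is empty and Lemma \ref{lem:productofspheres} is applied with $(i,j,k)=(c+1,0,m+p)$ --- is exactly right, and the equivariance and dimension bookkeeping check out: the two differences in the lemma are precisely $p$ and $m$. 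This is the same machinery as the paper's Theorem \ref{thm:alm} and Lemma \ref{lem:discemb}; the authors' intended derivation is presumably your parenthetical alternative via Proposition \ref{prop:coupledcoindex} combined with Theorem 1.3 of \cite{FrickHarrison}, which is itself proved by the construction you give, so the content is identical and your version has the merit of being self-contained.
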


\begin{corollary} 
The following statements hold:
\begin{compactenum}\setlength{\itemsep}{.3cm}
\item $d(\R P^2,S^k) = 4\Big\lfloor \frac{k}{4} \Big\rfloor + 4$.
\item $d(\C P^2,S^{8q}) = 8q+7$, and $d(\C P^2,S^k) = 8\Big\lceil \frac{k}{8} \Big\rceil$ for $k \neq 8q$. 
\item $d(\Delta_{2k+2}^{(k)},S^1) = 2k+2$. 
\item $d([3]^{*(k+1)}, S^1) = 2k+2$
\item $d(\Sigma_{\R P^2}, \Delta_{4q+2}^{(2q)}) = 4q+4$.
\end{compactenum}
\end{corollary}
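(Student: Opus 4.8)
The plan is to pin down each of the five values by a matching pair of bounds. Every upper bound comes via Corollary~\ref{cor:dim}: embed the two factors in Euclidean spaces of their minimum embedding dimensions and postcompose with a well-chosen nonsingular bilinear map. Every lower bound comes from the combinatorial obstruction of Section~\ref{sec:nonexistence}: Corollary~\ref{cor:alm} for items (1)--(4) (one factor a sphere) and Theorem~\ref{thm:alm} for item (5). Two soft facts are used throughout: a coupled embedding of $X \times Y$ restricts, for any triangulations of $X$ and $Y$, to a coupled almost-embedding, so $d(\Sigma_X, \Sigma_Y) \le d(X, Y)$ and likewise $d(\Sigma_X, S^m) \le d(X, S^m)$; and, since $S^a$ includes $\Z/2$-equivariantly into $S^b$ for $a \le b$, the restriction of a coupled embedding shows $d(X, S^a) \le d(X, S^b)$, i.e.\ $d(X, S^k)$ is nondecreasing in $k$. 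This monotonicity will carry a lower bound from the residue classes of $k$ in which the binary-digit hypothesis of Corollary~\ref{cor:alm} holds to the remaining classes.

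For the upper bounds I would use the embeddings $\R P^2 \hookrightarrow \R^4$, $\C P^2 \hookrightarrow \R^7$, and $\Delta^{(k)}_{2k+2}, [3]^{*(k+1)} \hookrightarrow \R^{2k+1}$ (the last two by general position), together with the following bilinear maps. For (1), writing $k = 4q+r$ with $0 \le r \le 3$, postcompose $\R P^2 \times S^k \hookrightarrow \R^4 \times \R^{4q+4}$ (using $S^k \hookrightarrow \R^{k+1} \subseteq \R^{4q+4}$) with quaternionic multiplication $\R^4 \times \R^{4(q+1)} \to \R^{4(q+1)}$, giving a coupled embedding into $\R^{4q+4}$. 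For (2), when $k = 8q$ use the polynomial-multiplication map $\R^7 \times \R^{8q+1} \to \R^{8q+7}$ of Lemma~\ref{lem:nonsingular} on $\C P^2 \hookrightarrow \R^7$ and $S^{8q} \hookrightarrow \R^{8q+1}$; when $k$ is not a multiple of $8$ (so $\lceil k/8\rceil = q+1$) use octonionic multiplication $\R^8 \times \R^{8(q+1)} \to \R^{8(q+1)}$ on $\C P^2 \hookrightarrow \R^8$ and $S^k \hookrightarrow \R^{k+1} \subseteq \R^{8q+8}$. For (3) and (4), apply the polynomial-multiplication map $\R^{2k+1} \times \R^2 \to \R^{2k+2}$ to the embedding of the complex and $S^1 \hookrightarrow \R^2$. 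For (5), apply the polynomial-multiplication map $\R^4 \times \R^{4q+1} \to \R^{4q+4}$ to $\R P^2 \hookrightarrow \R^4$ and $\Delta^{(2q)}_{4q+2} \hookrightarrow \R^{4q+1}$, then restrict to the simplicial structure.

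For the lower bounds I would record the relevant values of $p = n - \chi(\KG(\Sigma)) - 2$. For the skeleta $\Delta^{(k)}_{2k+2}$ and $\Delta^{(2q)}_{4q+2}$ the ground sets have $2k+3$ and $4q+3$ vertices while every nonface has at least $k+2$, resp.\ $2q+2$, vertices; hence no two nonfaces are disjoint, $\chi(\KG) = 1$, and $p = 2k$, resp.\ $p = 4q$. For $[3]^{*(k+1)}$ the minimal nonfaces are the $\binom{3}{2}$ pairs of vertices inside each of the $k+1$ copies of $[3]$, which under disjointness form a complete $(k+1)$-partite graph, so $\chi(\KG) = k+1$ (a proper coloring of the minimal-nonface graph extends to all nonfaces) and $p = 2k$. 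For a triangulation $\Sigma_{\R P^2}$ of $\R P^2$ and the K\"uhnel $9$-vertex triangulation of $\C P^2$ I would use $\chi(\KG) = 1$, hence $p = 3$ and $p = 6$: for $\R P^2$ this is the classical fact that in the $6$-vertex triangulation a triple of vertices is a face exactly when its complement is not, and for $\C P^2$ it is part of the coindex computation of \cite{FrickHarrison}. Substituting these into Corollary~\ref{cor:alm} (items (1)--(4)) and Theorem~\ref{thm:alm} (item (5)) and checking the binary-digit hypotheses — the pairs $(4q,3)$, $(8q,6)$, $(8q+1,6)$, $(1,2k)$, and $(3,4q)$ each share no binary digit — gives the strict lower bounds in the ``good'' residue classes of $k$; the remaining classes in (1) and (2) follow from $S^{4q} \subseteq S^k$, resp.\ $S^{8q+1} \subseteq S^k$, since the upper bound is unchanged. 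In each case the upper and lower bounds then coincide.

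The main obstacle is the pair of chromatic-number facts $\chi(\KG) = 1$ for the minimal triangulations of $\R P^2$ and $\C P^2$: these are the only inputs not reducible to elementary combinatorics of skeleta and joins, and the $\C P^2$ case genuinely uses the face lattice of the exceptional K\"uhnel complex — concretely, that no two of its $36$ four-element nonfaces are disjoint. A secondary point requiring care is matching the binary-digit hypotheses of Corollary~\ref{cor:alm} and Theorem~\ref{thm:alm} against the upper bounds across all residues of $k$, where the monotonicity reduction does the work; obtaining the exact values (rather than bounds off by one) hinges on $p$ being exactly $3$, $6$, $2k$, and $4q$ in the respective cases.
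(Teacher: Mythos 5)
Your proposal is correct and follows essentially the same route as the paper: lower bounds from Corollary~\ref{cor:alm} (resp.\ Theorem~\ref{thm:alm} for item (5)) applied to $\Sigma_{\R P^2}$, $\Sigma_{\C P^2}$, the skeleta, and the joins, and upper bounds from embeddings composed with the same nonsingular bilinear maps. You in fact supply details the paper leaves implicit — the monotonicity of $d(X,S^k)$ in $k$ used to cover the residue classes where the binary-digit hypothesis fails, and the explicit computations $\chi(\KG)=1$ for the skeleta and $\chi(\KG([3]^{*(k+1)}))=k+1$ — all of which check out.
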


\begin{proof}
For each item the lower bounds are obtained by obstructing a coupled almost-embedding, and the upper bounds are obtained from nonsingular bilinear maps.  We make use of the fact that the real projective plane~$\R P^2$ can be triangulated in a unique way by a six-vertex triangulation, and the complex projective plane $\C P^2$ can be triangulated in a unique way by a nine-vertex triangulation.   These triangulations $\Sigma_{\R P^2}$ and $\Sigma_{\C P^2}$ each have the property that no two nonfaces are disjoint, and thus the Kneser graphs $\KG(\Sigma_{\R P^2})$ and $\KG(\Sigma_{\C P^2})$ have no edges. In particular, $\chi(\KG(\Sigma_{\R P^2})) = 1 = \chi(\KG(\Sigma_{\C P^2}))$.   For more details see Matou\v sek \cite[Example 5.8.5]{Matousek}.  \\
\begin{compactenum}\setlength{\itemsep}{.3cm}
\item The lower bounds come from Corollary \ref{cor:alm} applied to $m = 4q$ and to $\Sigma_{\R P^2}$,  so $n - \chi -2 = 3$.  The upper bounds come from the existence of bilinear maps $\R^4 \times \R^{4q+4} \to \R^{4q+4}$.

\item The lower bounds come from Corollary \ref{cor:alm} applied to $m = 8q$ and $m = 8q+1$ and to $\Sigma_{\C P^2}$,  so $n - \chi - 2  = 6$.    The upper bounds come from the existence of bilinear maps $\R^7 \times \R^{8q} \to \R^{8q}$ and $\R^7 \times \R^{8q+1} \to \R^{8q+7}$.

\item The lower bounds come from Corollary \ref{cor:alm} applied to $m=1$ and to $\Delta_{2k+2}^{(k)}$,  so $n - \chi - 2 = 2k$.  The upper bounds come from Corollary \ref{cor:dim}.

\item The lower bounds come from Corollary \ref{cor:alm} applied to $m=1$ and to $[3]^{*(k+1)}$,  so $n - \chi - 2 =2k$.  The upper bounds come from Corollary \ref{cor:dim}.

\item The lower bounds come from Theorem \ref{thm:alm} with $n_1 - c_1 - 2 = 3$ and $n_2 - c_2 - 2 = 4q$.  The upper bounds come from Corollary \ref{cor:dim}.
\end{compactenum}
This completes the proof.
\end{proof}

The proof of Theorem \ref{thm:mainobstruct} is similar.

\begin{proof}[Proof of Theorem \ref{thm:mainobstruct}]
The coupled embeddability of all pairs of simplicial complexes in dimension $2p+2q+1$ follows from Corollary \ref{cor:dim}.
The lower bounds come from Theorem \ref{thm:alm} with $n_1 - c_1 - 2 = 2p$ and $n_2 - c_2 - 2 = 2q$, which do not share any ones in their binary expansions by hypothesis.
\end{proof}

\bibliographystyle{plain}

\end{document}